\def \frac#1#2{{#1\over #2}}
\def\1{{\bf 1}}
\def\0{{\bf 0}}
\def\2{{\bf 2}}
\def\x{{\bf x}}
\def\v{{\bf v}}
\def\w{{\bf w}}
\def\b{{\bf b}}
\def\cfrac#1#2{{#1\over #2}}
\newtheorem{defi}{Definition}[section]
\newtheorem{risu}{Result}[section]
\newtheorem{teo}{Theorem}[section]
\newtheorem{prop}{Proposition}[section]
\newtheorem{rem}{Remark}[section]
\newcommand{\be}{\begin{enumerate}}
\newcommand{\ee}{\end{enumerate}}
\newcommand{\bi}{\begin{itemize}}
\newcommand{\ei}{\end{itemize}}
\newcommand{\beq}{\begin{equation}}
\newcommand{\eeq}{\end{equation}}
\numberwithin{equation}{section} 
\begin{document}

\baselineskip=14 pt

\title{Twin relationships in Parsimonious Games: some results\footnote{A very preliminary version of the paper has been presented at the 2013 Workshop of the Central European Program in Economic Theory, which took place in Udine (20-21 June) and may be found in CEPET working papers \cite{PrePla13}.}}

\author{Flavio Pressacco\textsuperscript{a}, Giacomo Plazzotta\textsuperscript{b} and Laura Ziani\textsuperscript{c}}

\date{}

\maketitle

\noindent\textsuperscript{a} Dept. of Economics and Statistics D.I.E.S., Udine University, Italy\\ 
\textsuperscript{b} Imperial College London, UK\\
\textsuperscript{c} Dept. of Economics and Statistics D.I.E.S., Udine University, Italy 

\begin{abstract}
In a vintage paper concerning Parsimonious games, a subset of
constant sum homogeneous weighted majority games, Isbell introduced a twin
relationship based on transposition properties of the incidence
matrices upon minimal winning coalitions of such games. A
careful investigation of such properties allowed the discovery
of some results on twin games presented in this paper. In
detail we show that a) twin games have the same minimal winning
quota and b) each Parsimonious game admits a unique balanced
lottery on minimal winning coalitions, whose probabilities are
given by the individual weights of its twin game.

\end{abstract}

\renewcommand{\abstractname}{Keywords}
\begin{abstract}
Homogeneous weighted majority games,
incidence matrices, twin relationships, minimal winning quota,
balanced lottery.
\end{abstract}

\renewcommand{\abstractname}{Acknowledgements}
\begin{abstract}
 We acknowledge financial support of MedioCredito Friuli Venezia Friuli through the ``Bonaldo Stringher'' Laboratory of Finance, Department of Finance, University of Udine.
\end{abstract}

\section{Introduction}

In this paper we present some results concerning twin
relationships in Parsimonious games (henceforth $P$ games). $P$
games are the subset of constant sum homogeneous weighted
majority games characterized by the parsimony property to have,
for any given number $n$ $(>3)$\footnote{We will consider here $P$
games with $n>3$. Indeed, for $n=3$ there is a unique $P$ game
in which all players share the same weight, while all $P$ games
with $n>3$ have at least two types of players. See \cite{Isb56}, p. 185.} of non dummy
players in the game, the smallest number, i.e. exactly $n$, of
minimal winning coalitions. $P$ games have been defined and
studied by Isbell in a vintage paper (\cite{Isb56},
1956) where the special properties of this class of games are
described. Among other things Isbell introduced a twin
relationship on $P$ games, but without deepening the point,
which at the best of our knowledge did not receive any further
attention in the relevant literature.

To understand the twin relationship it is convenient to recall
that its premise is the existence of a general rule that drives
the one to one correspondence that obviously exists between the
set of players and the set of minimal winning coalitions of any
$P$ game. In turn this correspondence comes out from the
following idea: keeping account of the minimal homogeneous
representation of a given $P$ game, divide the players in $h$
groups $(2\leq h\leq n-2)$ so as all members of the same group
(of the same type) share the same individual weight. Order
players $j=1,\ldots,n$ and types $t=1,\ldots,h$ according to a
non decreasing (for players) or a strictly increasing (for
types) weight convention. In particular, players of the group with
minimum weight (type 1) may be called ``peones'', those of type
$h-1$ ``vice-top'' and the player (of group $h$) with greatest
weight ``top player''. In any $P$ game there are lower bounds
on the number of peones and of vice top and a binding constraint on the top class: indeed
there is just one top player.

Let us shortly call odd (respectively even) players, those
whose type is odd (even). After that, the one to one
correspondence is described by the following rules: to
any non top player there is associated the minimal
winning coalition made by that player and all players of
alternative parity and greater weight, whereas the coalition
made by all odd players (which may include or not the top
player) is associated to the top player. Note that, in this
way, also the set of minimal winning coalitions is divided in
$h$ groups and, matching the type order of players and
associated coalitions, that the numerousness $x_t$ of type $t$
players is equal to the one of type $t$ minimal winning
coalitions. All these results imply a special structure of the
incidence matrix upon minimal winning coalitions of a $P$ game, i.e. the square $n$ dimensional
binary matrix $A$ whose elements $a_{ij}$ are 1 if column
player $j$ belongs to the row minimal winning coalition $i$ (or
$S_i$), and 0 otherwise. In particular the square submatrix
$M$, obtained from $A$ by deleting the last row and column, is
block diagonal upper triangular. The diagonal blocks of $M$ are
$x_t$ dimensional diagonal square matrices, rectangular blocks
over the diagonal alternate matrices with all elements equal to
one to null matrices, while by definition all the entries under the diagonal blocks are zero.

This special structure inspired Isbell in recognizing that the
transposed $A^{T}$ of the incidence matrix $A$ of any ``primal''
game $G$ should still be the incidence matrix of a $P$ game
${\overline{G}}$ to be called twin (dual in Isbell terminology)
of $G$. In other words a couple of $P$ games are twins if and
only if each incidence matrix of the couple is obtained by
transposition of the incidence matrix of the twin\footnote{As we shall see later (Prop. \ref{prop:333}) an alternative fully equivalent definition of twins could be based on the symmetry of the free type representations of twin games.}.

Our contribution in this paper is embedded in a couple of
theorems regarding twins. The first result says that twins have
the same minimal winning quota, and is a straightforward
corollary of a theorem regarding the determinant of the
incidence matrix of any $P$ game. The theorem states that the
absolute value of the determinant is the minimal winning quota
of the game.

The second theorem regards the connection between the balanced
lottery on minimal winning coalitions of a $P$ game and the
individual weights of its twin. To understand the point suppose
that the following three-stage mechanism is adopted to fix a
fair and stable result for a $P$ game, avoiding lengthy and
may be unsatisfying negotiations between players. In the first
stage the probabilities driving a lottery on the set of minimal
winning coalitions are fixed. In the second stage the lottery
mechanism makes the choice of one minimal winning coalition and
in the third the (normalized to one) global reward of the game
is divided within players of the chosen minimal winning
coalition. In order to grant ex post fairness, in this stage
individual rewards are proportional to individual weights.

We define balanced\footnote{Generally speaking, a given set of
$m$ coalitions $\mathbf{S}=(S_1,\ldots,S_i,\ldots,S_m$) is
balanced if there exists a positive $m-$dimensional vector
$\mathbf{d}$ such that for any (non dummy) player $j$ it is
$\sum_{i:j \in S_i}d_i=1$. Then, except for a normalization, a
balanced lottery on a $P$ game has the property to balance the
set of minimal winning coalitions of the game.} a lottery that
gives to all players the same (ex ante) probability to be a
member of the chosen minimal winning coalition. Clearly a
balanced lottery adds ex ante to ex post fairness: indeed it is
easy to check that (given the ex post mechanism of reward) only
under a balanced lottery the expected gain of each player is
exactly his (normalized) individual weight. A question
immediately arises: any $P$ game admits (may be just one)
balanced lotteries? The answer is positive: our second
fundamental theorem shows that for any $P$ game there exists
just one balanced lottery which is given by the normalized and
properly reordered set of individual weights of its twin.
Precisely the balanced probabilities of type $t$ coalitions in
the game ${G}$ are, for any $t$ but the top one, the normalized
weights of type $h-t$ players in the twin ${\overline{G}}$,
while the probability of the coalition associated to the top
player is just the normalized weight of the top player in
${\overline{G}}$. The transposition properties of incidence
matrices play a decisive role also in the proof of this result.

The plan of the paper is as follows. In section 2 a
short recall of the basics of constant sum homogeneous weighted
majority games is offered; section 3, divided in three
subsections (one to one correspondence between players and
minimal winning coalitions; the special structure of the
incidence matrix of a $P$ game; the transposition approach to
the twin relationships in $P$ games) resumes fundamental
results on $P$ games\footnote{Such results may be found in Isbell paper or are straightforward consequences of his work.}. Section 4 (minimal
winning quota in twin games) and 5 (balanced lotteries
and twin games) give proofs of our main theorems on twin games.
Some examples are offered in section 6. Conclusions
follow in the final section 7.

\section{Basics on homogeneous weighted majority games}

Let us recall some well known basic definitions.

As usual $N=\{1,\ldots,n\}$ denotes the set of all (non dummy)
players.

A coalition $S$ is a subset of the set $N$. A game $G$ in
coalitional function form is defined by the coalitional
function of the game, that is the real function $v:
\mathcal{P}(N) \rightarrow \mathbb{R}$.

A game in coalitional function form is simple if its $v$
function has values in $\{0,1\}$. A coalition $S$ is winning if
$v(S)=1$, losing if $v(S)=0$.

A simple game is constant sum if, for any $S \in
\mathcal{P}(N)$, $v(S)+v(\widetilde{S})=1$.

A coalition $S$ is said to be minimal winning if $v(S)=1$ and,
for any $T\varsubsetneq S$, $v(T)=0$. The set of minimal
winning coalitions is denoted by $WM$. A player $j$ who does
not belong to any minimal winning coalition is said dummy; at
the other extreme, it is a dictator if it is $v(j)=1$. We will
consider games free of dictator and dummies.

A simple, weighted majority game is described by a
representation $(q;\w)$ where $\w$ is a vector
$(w_1,w_2,\ldots,w_n)$ of positive weights,
$q>\cfrac{1}{2}\cdot w(N)=\cfrac{1}{2}\cdot\sum_{j\in N}w_j$ is
the winning quota and $v(S)=1\Leftrightarrow w(S)=\sum_{j\in
S}w_j\geq q$.

A representation $(q;\w)$ of a weighted majority game is
homogeneous if $w(S)=q$ for any $S$ of $WM$.

Homogeneous weighted majority games are games for which (at
least) a homogeneous representation exists.

Consider now the class of all simple, constant sum $n$ person
homogeneous weighted majority games\footnote{At the origins of
game theory, homogeneous weighted majority games (h.w.m.g.)
have been introduced in \cite{VNM47} by Von Neumann-Morgenstern
and have been studied mainly under the constant sum condition.
Subsequent treatments in the absence of the constant sum
condition (with deadlocks) may be found e.g. in \cite{Ost87} by
Ostmann, who gave the proof that any h.w.m.g. (including non
constant sum ones) has a unique minimal homogeneous
representation, and in \cite{Ros87}. Generally speaking, the
homogeneous minimal representation is to be thought in a
broader sense but hereafter the restrictive application
concerning the constant sum case is used.}.
\begin{prop}
All games of such a class admit a minimal homogeneous
representation, that is a (homogeneous) representation $(q;\w)$
such that all weights are integers, there are players with
(minimum) weight 1 and $q=\cfrac{1+w(N)}{2}$, which implies for
any $S \in WM$, $w(S)-w(\widetilde{S})=1$.
\end{prop}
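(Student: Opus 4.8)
The plan is to separate the statement into its classical, representation-theoretic part and its constant-sum-specific part. Existence of a homogeneous representation is built into the definition of the class, and the fact that one may take it with positive integer weights, smallest weight equal to $1$, and that such a minimal representation is unique, is exactly the content of the classical results recalled in the footnote (Von Neumann--Morgenstern \cite{VNM47} and Ostmann \cite{Ost87}); I would simply invoke these for the first two asserted properties. The only genuinely substantive point, and the one tying the normalization to the constant-sum hypothesis, is the quota identity $q=\cfrac{1+w(N)}{2}$, together with its reformulation $w(S)-w(\widetilde{S})=1$ for $S\in WM$. I would prove this by exploiting the complementation duality between minimal winning and maximal losing coalitions.

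Concretely, fix the minimal homogeneous representation $(q;\w)$ and a player $p$ with $w_p=1$. Since the game is free of dummies, $p$ belongs to some minimal winning coalition $S$, and homogeneity gives $w(S)=q$. Two inequalities then pin down $q$. First, $S$ is winning, so by the constant-sum property its complement $\widetilde{S}$ is losing; hence $w(\widetilde{S})=w(N)-q<q$, and integrality upgrades this to $w(N)\le 2q-1$. Second, removing $p$ from the minimal winning coalition $S$ makes it losing, so by constant sum its complement $\widetilde{S}\cup\{p\}$ is winning; therefore $w(\widetilde{S})+w_p\ge q$, i.e. $w(N)-q+1\ge q$, i.e. $w(N)\ge 2q-1$. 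Combining the two gives $w(N)=2q-1$, that is $q=\cfrac{1+w(N)}{2}$.

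The ``which implies'' clause is then immediate: for any $S\in WM$ homogeneity gives $w(S)=q$, while $w(\widetilde{S})=w(N)-w(S)=w(N)-q=q-1$, so that $w(S)-w(\widetilde{S})=1$.

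I expect the only delicate points to be bookkeeping rather than deep: one must make sure a weight-$1$ player actually sits inside a minimal winning coalition, which is guaranteed precisely because the game has no dummies, and one must use integrality of the weights to turn the strict inequality $w(\widetilde{S})<q$ into $w(N)\le 2q-1$. The harder conceptual step is hidden in what I am willing to cite: the existence, integrality and minimality (smallest weight $1$) of the homogeneous representation rest on the classical structure theory rather than on the short argument above, so the real obstacle is that the clean derivation of the quota presupposes that this minimal representation has already been secured from the literature.
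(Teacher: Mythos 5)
Your proposal is correct, but be aware that it does more work than the paper itself does at this point: the paper offers \emph{no proof} of this proposition. It is stated in Section 2 purely as a recall of classical results, with the footnote crediting Von Neumann--Morgenstern \cite{VNM47} and Ostmann \cite{Ost87} (see also \cite{Ros87}) for the existence, integrality and uniqueness of the minimal homogeneous representation, the quota formula being implicitly treated as part of that classical package. Your decomposition agrees with the paper on the structural part---existence, integer weights and minimum weight $1$ are exactly what you import from the same sources---but you then actually derive the constant-sum-specific normalization $q=(1+w(N))/2$ instead of citing it, and your complementation argument is sound: dummy-freeness puts a weight-$1$ player $p$ inside some $S\in WM$; homogeneity makes $q=w(S)$ an integer, so $\widetilde{S}$ losing gives $w(N)-q<q$ and, by integrality, $w(N)\le 2q-1$; minimality of $S$ makes $S\setminus\{p\}$ losing, so by the constant-sum property its complement $\widetilde{S}\cup\{p\}$ is winning, giving $w(N)-q+1\ge q$, i.e. $w(N)\ge 2q-1$; the two bounds force $w(N)=2q-1$, and the clause $w(S)-w(\widetilde{S})=1$ for $S\in WM$ follows by subtraction. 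What each route buys: the paper's pure citation is economical and appropriate for a background section; your version makes visible exactly where the constant-sum hypothesis enters (only in the quota normalization, through the fact that the complement of a losing coalition is winning) and confines the genuinely deep input---integrality and minimality of the homogeneous representation---to a single appeal to the literature. Your own closing remark identifies the right caveat: the short argument is not self-contained, since it presupposes the cited structure theory; but that is precisely the same dependence the paper accepts wholesale.
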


Hereafter we will suppose that in such a representation the
vector $\w$ is ordered according to the convention $w_1=1,
w_j\leq w_{j+1}$ for any $j$.

\begin{prop}
The cardinality of the $WM$ set of our class may be either
greater or equal (but not lower) than $n$. See \cite{Isb56}, p.
185.
\end{prop}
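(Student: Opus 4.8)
The plan is to bound the cardinality of $WM$ from below by the rank of the incidence matrix and to show that this rank is forced to be $n$. Let $A$ be the $|WM|\times n$ binary matrix whose rows are the indicator vectors of the coalitions in $WM$. Homogeneity of the minimal representation $(q;\mathbf{w})$ of Proposition 1 reads exactly $A\mathbf{w}=q\mathbf{1}$, where $\mathbf{1}$ is the all-ones vector. Since the rank of a matrix never exceeds its number of rows, we have $\mathrm{rank}(A)\le |WM|$; hence it suffices to prove that the $n$ columns of $A$ are linearly independent, for then $|WM|\ge\mathrm{rank}(A)=n$, with no accompanying upper bound, which is exactly the claim.

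To prove column independence I would argue by contradiction through a perturbation of the weights. Suppose $\mathbf{c}\neq\mathbf{0}$ satisfies $A\mathbf{c}=\mathbf{0}$, that is $\sum_{j\in S}c_j=0$ for every $S\in WM$. Then for $\varepsilon$ small enough the vector $\mathbf{w}_\varepsilon=\mathbf{w}+\varepsilon\mathbf{c}$ is again a homogeneous representation of the same game with the same quota $q$: every $S\in WM$ still weighs exactly $q$ because $\sum_{j\in S}c_j=0$; every losing coalition keeps weight strictly below $q$, since the finitely many strict inequalities $w(T)<q$ survive a sufficiently small perturbation; and all weights remain positive. Thus $(q;\mathbf{w}_\varepsilon)$ represents the same $v$ and is homogeneous.

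The decisive step, which I expect to be the main obstacle, is to exclude this by invoking the rigidity of homogeneous representations: for a constant-sum homogeneous game free of dummies the homogeneous representation is unique up to a positive scalar. Granting this, $\mathbf{w}_\varepsilon=\lambda\mathbf{w}$ for some $\lambda>0$, and evaluating on any $S\in WM$ gives $q=\lambda q$, so $\lambda=1$ and $\varepsilon\mathbf{c}=\mathbf{0}$, contradicting $\mathbf{c}\neq\mathbf{0}$. The real content therefore lies in this uniqueness, which I would derive from the swap structure of minimal winning coalitions --- if two coalitions of $WM$ differ only by exchanging player $i$ for player $j$ then $w_i=w_j$ in every homogeneous representation --- combined with the fact that the absence of dummies makes the players swap-connected, forcing the ratio of any two homogeneous weight vectors to be globally constant; alternatively one may simply cite the classical uniqueness result behind Ostmann's and Isbell's analyses referenced above.

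Assembling the pieces, linear independence of the columns gives $\mathrm{rank}(A)=n$, and since $A$ has $|WM|$ rows we conclude $|WM|\ge n$. For the parsimonious subclass the same matrix is square, and the block-triangular form recorded in the introduction makes it nonsingular, recovering the extremal case $|WM|=n$.
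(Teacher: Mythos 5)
The paper offers no proof of this proposition at all --- it is stated as a fact imported from Isbell \cite{Isb56}, p.~185 --- so your attempt has to stand entirely on its own, and it does not. The framework is fine as far as it goes: $A\mathbf{w}=q\mathbf{1}$ is correct, the perturbation $\mathbf{w}_\varepsilon=\mathbf{w}+\varepsilon\mathbf{c}$ for a kernel vector $\mathbf{c}$ is correct (positivity of the perturbed weights also handles the non-minimal winning coalitions, which you pass over quickly), and it correctly reduces the claim to the assertion that any two homogeneous representations of such a game are proportional. But note that this reduction buys nothing by itself: proportionality of all homogeneous representations is \emph{equivalent} to the full-column-rank statement you want (rank $n$ gives proportionality by exactly your algebra; rank $<n$ gives a non-proportional representation by exactly your perturbation). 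So the entire content of the proposition is pushed into the uniqueness step, and that is where your argument fails.

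Your sketch of uniqueness proves too much, because it never uses the constant-sum hypothesis, and without constant sum the uniqueness is false: the unanimity game ($WM=\{N\}$, hence no dummies) is homogeneous and admits \emph{any} positive weight vector with quota $w(N)$ as a homogeneous representation --- and correspondingly $|WM|=1<n$, so the proposition itself fails there. Concretely, two things go wrong. First, swap-connectedness does not hold: in the four-person $P$ game $(3;1,1,1,2)$, with $WM=\{\{4,1\},\{4,2\},\{4,3\},\{1,2,3\}\}$, the three peones are pairwise swappable but the top player belongs to no one-for-one swap; his weight is pinned down only by the one-for-many substitution $w_4=w_i+w_j$ obtained by comparing $\{4,k\}$ with $\{1,2,3\}$. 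Producing such substitution relations is exactly where constant sum must enter (the complement of the losing $S\setminus\{i\}$ is winning), and it is the real content behind the recursions \eqref{Eqn:recursive} and \eqref{Eqn2:first}--\eqref{Eqn2:second} of the paper. Indeed, had your mechanism worked, one-for-one swaps would force \emph{all} weights to be equal, which is false in any $P$ game with $h\geq 2$ types, so the mechanism is internally inconsistent. Second, the fallback citation does not close the hole: Ostmann's theorem \cite{Ost87} asserts uniqueness of the \emph{minimal} homogeneous representation, not proportionality of all homogeneous representations, and the unanimity example shows these are genuinely different claims. As it stands, the decisive step is missing and the argument is circular.
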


\begin{defi}
We call Parsimonious games (hereafter $P$ games) the subset of
constant sum homogeneous weighted majority games characterized
by the parsimony property to have for any given number $n$ of
non dummy players in the game the smallest number, i.e. exactly
$n$, of minimal winning coalitions.
\end{defi}

As said before, general properties of $P$ games have been studied by Isbell. A recall of such
properties is given in the next section divided in three
subsections devoted respectively to:
\begin{itemize}
\item[a] The one to one correspondence between players and
    minimal winning coalitions
\item[b] The incidence matrix upon winning minimal coalitions of a $P$ game
\item[c] The transposition approach to the twin relationship
    in $P$ games
\end{itemize}

\section{Main results on Parsimonious games}
\subsection{One to one correspondence between players and minimal winning
coalitions}

Let us recall that in any $n$ person $P$ game there are $h$
$(2\leq h\leq n-2)$ types; players of type $t$ share, in the
minimal homogeneous representation, a type weight $w_t$ and the
ordering of types satisfies $w_t<w_{t+1}$. As said in the
introduction, odd (even) players are those of type $t$ odd
(even).
Let us denote by $x_t$ the numerousness of players of type $t$ and give the following:
\begin{defi}\label{def:31}
Let $G$ be a $P$ game with $h$ types; the type representation of $G$ is the vector $\x=(x_1,\ldots,x_t,\ldots,x_h)$, while the free type representation $_f\x=(x_1,\ldots,x_t,\ldots,x_{h-1})$ is the one obtained by deleting its last component $x_h$.
\end{defi}

The main Isbell result (\cite{Isb56}, p. 185, penultimate indent) was that
\begin{prop}
In all $P$ games the one to one correspondence between players
and minimal winning coalitions is described by a unique general
rule. Precisely, to any non top player (of type $t<h$) there
corresponds the coalition made by that player and all players
of alternative parity and greater weight; to the top player the
coalition made by all odd players (which may or not include the
top player).
\end{prop}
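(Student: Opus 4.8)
The plan is to exploit the homogeneity of the minimal homogeneous representation to reduce the membership test ``minimal winning'' to a single arithmetic condition, and then to close the argument by a counting step based on the parsimony property. The cornerstone is the following elementary observation. In the minimal homogeneous representation $(q;\w)$ all weights are positive integers, so for any coalition $S$ with $w(S)=q$ and any member $j\in S$ one has $w(S\setminus\{j\})=q-w_j<q$; hence such an $S$ is winning while every proper subcoalition is losing, i.e. $S\in WM$. Conversely, by homogeneity every $S\in WM$ satisfies $w(S)=q$. Thus, in this representation, $S\in WM$ if and only if $w(S)=q$, which (using $w(S)+w(\widetilde S)=w(N)=2q-1$) is also equivalent to $w(S)-w(\widetilde S)=1$. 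This reduces the entire statement to a computation of weights.

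Granted this reduction, I would let $\phi$ be the map given by the rule: to a non-top player $j$ of type $t<h$ it assigns $S_j=\{j\}\cup\{\text{all players of types }t+1,t+3,t+5,\dots\}$ (opposite parity, greater weight), and to the top player it assigns $S_{\mathrm{top}}=\{\text{all odd players}\}$. First I would verify that $\phi$ is injective, which is routine from the block pattern of the rule: the coalition $S_j$ contains exactly one player of its lowest present type $t$ (namely $j$) and no player of any lower type, so images of distinct players---whether of the same type or of different types---are forced to differ, while $S_{\mathrm{top}}$ is singled out as the unique image consisting only of odd players. Then, once each of the $n$ coalitions $\phi(j)$ is shown to have weight exactly $q$, the reduction above makes them $n$ distinct elements of $WM$; since the parsimony property gives $|WM|=n$, these coalitions exhaust $WM$ and $\phi$ is the asserted bijection. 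Uniqueness of the rule follows because the minimal homogeneous representation---hence the weights, the types, their parities and the set $WM$---is intrinsic to $G$, so the correspondence is determined by these data alone.

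The heart of the proof, and the step I expect to be the main obstacle, is establishing the weight identities
\[
w_t+\sum_{\substack{s>t\\ s\equiv t+1\,(\mathrm{mod}\,2)}}x_s w_s=q\quad(t<h),\qquad \sum_{t\ \mathrm{odd}}x_t w_t=q .
\]
My plan is to prove these by induction on the number $h$ of types. The inductive engine is the step relation $w_{t+2}=w_t+x_{t+1}w_{t+1}$ among the type weights, which I would derive from the recursive structure of the (unique) minimal homogeneous representation together with the constant-sum relation $w(S)-w(\widetilde S)=1$; feeding this relation into the displayed sums and telescoping from the boundary collapses each of them to the common value $q=\tfrac12\bigl(1+w(N)\bigr)$. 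The delicate points are the boundary conditions---the peones (type $1$, weight $1$) at the bottom and the single top player ($x_h=1$) at the top---and the parity bookkeeping, since whether $h$ is odd or even governs whether the top player belongs to its own associated coalition $S_{\mathrm{top}}$ and hence which of the two identities closes the induction. Once these are settled, the reduction lemma and the parsimony count finish the argument.
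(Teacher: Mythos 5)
Your finishing moves are sound, and in fact quite clean: the reduction lemma (in the minimal homogeneous representation, $S\in WM$ if and only if $w(S)=q$) is correct, injectivity of your map $\phi$ is indeed routine, and the counting step --- $n$ distinct coalitions of weight $q$ must exhaust $WM$ because parsimony gives $|WM|=n$ --- would legitimately close the argument. The genuine gap sits exactly where you yourself locate ``the heart of the proof.'' Your inductive engine, the step relation $w_{t+2}=w_t+x_{t+1}w_{t+1}$, is not an independently available fact: you say you would derive it ``from the recursive structure of the (unique) minimal homogeneous representation together with the constant-sum relation,'' but no such derivation exists, because that relation is logically downstream of the statement being proved. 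In the paper (following Isbell, who is cited for this proposition without proof) the order is the reverse of yours: the correspondence rule is the primitive result, and the recursion \eqref{Eqn:recursive}, hence the identities \eqref{Eqn2:first}--\eqref{Eqn2:second} and $q=\sum_{t\ \mathrm{odd}}x_t w_t$, are obtained by equating the weights of the coalitions that the rule produces. Indeed $w_{t+2}=w_t+x_{t+1}w_{t+1}$ literally asserts that the rule's coalition for a type-$(t+2)$ player and the rule's coalition for a type-$t$ player have equal weight, i.e.\ it is a fragment of the proposition itself; taking it as the starting point makes the argument circular.

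Note also where parsimony enters your sketch: only at the very end, in the counting step. But the recursion is false for general constant-sum homogeneous weighted majority games --- it is precisely the imprint of parsimony on the weight structure --- so any correct proof must invoke $|WM|=n$ inside the structural analysis that yields the recursion, or equivalently yields the type pattern (the unique top player $x_h=1$, the bounds $x_1>1$ and $x_{h-1}>1$, and the alternating-parity membership). That analysis is Isbell's actual theorem, and it is the content your proposal leaves unsupplied: as written, you have reduced the proposition to an unproved claim that is essentially equivalent to it, with only the (valid, but easy) exhaustion-by-counting step added on top.
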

Keeping account that all minimal winning coalitions share the
same minimal winning quota and hence the same sum of individual
weights, the following consequences of the general rule are
straightforward for the sequence of type weights:
\begin{subequations}\label{Eqn:recursive}
\begin{align}
w_1&=1\quad \text{initial condition}\label{Eqn1:first}\\
w_2&=x_1\cdot w_1=x_1\label{Eqn1:second}\\
w_t&=x_{t-1}\cdot w_{t-1}+ w_{t-2}, \quad t=3,\ldots, h-1\label{Eqn1:third}\\
w_h&=(x_{h-1}-1)\cdot w_{h-1}+w_{h-2}\label{Eqn1:forth}
\end{align}
\end{subequations}

Hence \begin{prop} In any $P$ game with $h$ types the weights
of all types are unequivocally given as the recursive implicit function
\eqref{Eqn:recursive} of the free type representation
$_f\x=(x_1,\ldots,x_t,\ldots,x_{h-1})$.
\end{prop}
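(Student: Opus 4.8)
The plan is to treat the displayed system \eqref{Eqn:recursive} not as a definition but as a consequence of the equal-quota property, and then to read off uniqueness and the dependence on $_f\x$ alone from its triangular shape. First I would recall that, since the representation is homogeneous, every minimal winning coalition carries the same weight $q$ (the minimal winning quota); writing the correspondence rule explicitly, the coalition attached to a type-$t$ player ($t<h$) consists of that one player together with all players of the types of opposite parity lying above $t$, while the coalition attached to the top player is the set of all odd players. Equating the weights of two coalitions of the same parity whose ``tails'' of higher opposite-parity types coincide produces, after cancellation of the common tail, precisely the interior relation \eqref{Eqn1:third}: comparing the type-$(t-2)$ and type-$t$ coalitions leaves only $w_{t-2}+x_{t-1}w_{t-1}$ against $w_t$.

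Next I would dispose of the two boundary equations separately, since these are where the argument is least automatic. For \eqref{Eqn1:second} I would compare the type-$2$ coalition with the top coalition of all odd players: the two share every odd type above $2$, so their weights differ only by the $x_1$ peones of weight $w_1$ on one side and the single type-$2$ player on the other, forcing $w_2=x_1w_1$. For \eqref{Eqn1:forth} I would use that there is exactly one top player ($x_h=1$): the type-$(h-1)$ coalition is then $\{$one type-$(h-1)$ player, the top player$\}$ with weight $w_{h-1}+w_h=q$, while the type-$(h-2)$ coalition has weight $w_{h-2}+x_{h-1}w_{h-1}=q$; equating the two gives $w_h=(x_{h-1}-1)w_{h-1}+w_{h-2}$. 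The initial condition $w_1=1$ is inherited from the minimal homogeneous representation.

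With \eqref{Eqn:recursive} in hand, the proposition itself is a short induction. Each equation expresses $w_t$ explicitly in terms of the strictly earlier weights $w_{t-1},w_{t-2}$ and the single datum $x_{t-1}$, so the system is forward-triangular: starting from $w_1=1$ and $w_2=x_1$, the value of every $w_t$ is computed without ambiguity, and by induction on $t$ the whole vector $(w_1,\ldots,w_h)$ is uniquely determined. Crucially, the numerousness $x_h$ of the top type never enters any of \eqref{Eqn1:first}--\eqref{Eqn1:forth}; hence the weights are a function of the free type representation $_f\x=(x_1,\ldots,x_{h-1})$ only, which is exactly the assertion.

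The step I expect to be the real obstacle is not the final induction but the honest derivation of the two boundary equations, because there the generic ``cancel the common tail'' mechanism breaks down: the type-$2$ case must be matched against the top (all-odd) coalition rather than against a nonexistent type-$0$ coalition, and the type-$h$ case relies essentially on the binding constraint $x_h=1$. I would therefore be careful to verify that these two comparisons are between genuine minimal winning coalitions of equal quota, so that the resulting relations are consistent with the interior recursion and close the system exactly.
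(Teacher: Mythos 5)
Your proof is correct and takes essentially the same route as the paper: there, \eqref{Eqn:recursive} is presented as a direct consequence of the one-to-one correspondence rule combined with the equal-quota property of minimal winning coalitions, which is exactly the pairwise comparison-and-cancellation argument you carry out, and the uniqueness assertion then follows from the forward-triangular structure of the system just as you conclude. Your write-up simply makes explicit (including the boundary comparisons yielding $w_2=x_1w_1$ and $w_h=(x_{h-1}-1)w_{h-1}+w_{h-2}$, and the observation that $x_h$ never enters) what the paper dismisses as ``straightforward''.
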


Moreover \begin{prop} To preserve strict monotony of the
sequence of type weights, the type representation  needs to satisfy,
in addition to the binding constraint $x_h=1$, the lower
bounds: $x_1>1$ (otherwise $w_2=w_1=1$) and $x_{h-1}>1$
(otherwise $w_h=w_{h-2}$). No other constraints hold (\cite{Isb56}, p. 185).
\end{prop}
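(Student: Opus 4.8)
The plan is to read the strict-monotony requirement $w_t < w_{t+1}$ (for $t=1,\ldots,h-1$) straight off the recursion \eqref{Eqn:recursive}, treating separately the first inequality, the generic interior ones, and the terminal one, since the recursion takes its three distinct forms \eqref{Eqn1:second}, \eqref{Eqn1:third}, \eqref{Eqn1:forth} at exactly these places. A preliminary step I would dispatch first is that every type weight is strictly positive: by induction, $w_1=1>0$ and $w_2=x_1\geq 1>0$; the interior rule \eqref{Eqn1:third} propagates $w_t=x_{t-1}w_{t-1}+w_{t-2}>0$ because $x_{t-1}\geq 1$ is a positive integer and $w_{t-1},w_{t-2}>0$; and \eqref{Eqn1:forth} gives $w_h=(x_{h-1}-1)w_{h-1}+w_{h-2}>0$ since $x_{h-1}\geq 1$ and $w_{h-2}>0$. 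This positivity is precisely what will make the interior inequalities automatic.

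The three cases then go as follows. For the first link, \eqref{Eqn1:first}--\eqref{Eqn1:second} turn $w_1<w_2$ into $1<x_1$, i.e.\ the stated bound $x_1>1$; and $x_1=1$ collapses $w_2=w_1=1$, breaking monotony. For an interior step $w_t<w_{t+1}$ with $2\leq t\leq h-2$, substituting \eqref{Eqn1:third} yields $w_{t+1}=x_t w_t+w_{t-1}\geq w_t+w_{t-1}>w_t$, using $x_t\geq 1$ and the positivity $w_{t-1}>0$ just proved; hence these inequalities hold for every admissible choice of $x_t$, with no extra restriction, which is exactly the content of ``no other constraints hold.'' For the terminal step, \eqref{Eqn1:forth} gives $w_h-w_{h-1}=(x_{h-1}-2)w_{h-1}+w_{h-2}$, which is $\geq w_{h-2}>0$ once $x_{h-1}\geq 2$, whereas $x_{h-1}=1$ forces $w_h=w_{h-2}$; this establishes the second bound $x_{h-1}>1$.

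Collecting the three cases, the only non-automatic links in the monotony chain are the first and the last, and they are equivalent to $x_1>1$ and $x_{h-1}>1$ respectively; together with the structural constraint $x_h=1$ (a single top player) these are necessary and sufficient, so no constraints beyond them arise. I foresee no genuine obstacle here: the only points demanding care are sequencing the argument so that positivity is in hand before it is invoked in the interior and terminal steps, and noting that the $-1$ carried by \eqref{Eqn1:forth} makes $x_{h-1}=1$ \emph{collapse} $w_h$ onto $w_{h-2}$ rather than merely fail to increase it.
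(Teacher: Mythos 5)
Your argument is correct and is, in essence, a completion of what the paper merely sketches: this proposition is recalled from Isbell without proof, its only justification being the two parenthetical remarks (``otherwise $w_2=w_1=1$'' and ``otherwise $w_h=w_{h-2}$''), and these coincide exactly with your two necessity arguments read off \eqref{Eqn1:second} and \eqref{Eqn1:forth}. What you supply beyond the paper is the other half of the claim: the positivity induction, the observation that every interior step $w_{t+1}=x_t w_t+w_{t-1}\geq w_t+w_{t-1}>w_t$ holds automatically for any $x_t\geq 1$ (which is the real content of ``no other constraints hold''), and the check that $x_{h-1}\geq 2$ is not merely the negation of the failure case but actually sufficient at the terminal step. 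One caveat, inherited from the paper rather than introduced by you: the recursion \eqref{Eqn:recursive} is consistent only for $h\geq 3$, since for $h=2$ both \eqref{Eqn1:second} and \eqref{Eqn1:forth} purport to define $w_2$ (the latter through an undefined $w_0$), so your ``first'' and ``terminal'' cases are no longer disjoint and rest on conflicting formulas; $h=2$ games exist for every $n\geq 4$ (with the convention $w_0=0$ the top weight is $w_2=x_1-1$, so strict monotony there in fact requires $x_1\geq 3$, slightly more than the stated bounds), and a fully airtight treatment would dispose of that case separately. For $h\geq 3$, the regime the paper's recursion is actually written for, your proof is complete and sound.
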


In the next section we will exploit also the following
relations (\cite{Isb56}, p. 186, first indent) between type weights of the players:
\begin{subequations}
\begin{align}
\text{for $t$ odd: } \quad w_t&=1+\sum_{s \hspace{0.1cm}\text{even}<t}x_s\cdot w_s \label{Eqn2:first}\\
\text{for $t$ even: } \quad w_t&=\sum_{s \hspace{0.1cm} \text{odd}<t}x_s\cdot w_s\label{Eqn2:second}
\end{align}
\end{subequations}

Formula \eqref{Eqn2:first} comes from the minimal winning
character of the coalitions made by one of the peones and all
even players and, respectively, by one player of type $t$ (odd)
and all even players of greater weight; formula
\eqref{Eqn2:second} from the minimal winning character of the
coalitions made by all odd players and, respectively, by one
player of type $t$ even and all odd players of greater weight.
Hence
\begin{prop}
\begin{equation}
q=1+\sum_{s \hspace{0.1cm}\text{even}}x_s\cdot w_s=\sum_{s \hspace{0.1cm} \text{odd}}x_s\cdot w_s
\end{equation}
\begin{equation}
w(N)=1+2\cdot \sum_{s \hspace{0.1cm}\text{even}}x_s\cdot w_s=2\cdot\sum_{s \hspace{0.1cm} \text{odd}}x_s\cdot w_s-1
\end{equation}
\end{prop}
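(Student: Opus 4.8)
The plan is to read off the two expressions for $q$ directly from the weights of two specific minimal winning coalitions, and then to derive the expressions for $w(N)$ from the minimal homogeneous representation identity $q=\frac{1+w(N)}{2}$. First I would invoke the general correspondence rule (the Isbell rule recalled above) to name the two coalitions. The coalition associated with the top player is the set of all odd players; since the representation is homogeneous it has weight exactly $q$, which gives $q=\sum_{s\ \text{odd}}x_s w_s$. Next, a peon is a type-$1$ (hence odd) non-top player, so the coalition associated with it consists of that peon together with all even players, these being precisely the players of alternative parity and greater weight (a peon having minimum weight). Again by homogeneity this coalition has weight $q$, and since $w_1=1$ its weight is $1+\sum_{s\ \text{even}}x_s w_s$; this yields the second formula for $q$.

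For the total weight I would use $q=\frac{1+w(N)}{2}$, equivalently $w(N)=2q-1$. Substituting the two expressions for $q$ gives respectively $w(N)=1+2\sum_{s\ \text{even}}x_s w_s$ and $w(N)=2\sum_{s\ \text{odd}}x_s w_s-1$. As an internal consistency check, adding the two expressions for $q$ and using the partition of $N$ into odd and even types gives $2q=1+\sum_{s\ \text{odd}}x_s w_s+\sum_{s\ \text{even}}x_s w_s=1+w(N)$, which recovers the same identity independently of the representation property.

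The argument has no serious obstacle; it is essentially a matter of certifying and then evaluating two coalitions. The only points demanding care are the correct invocation of the one-to-one correspondence to guarantee that the two named coalitions are indeed minimal winning, the use of homogeneity to equate each of their weights with $q$, and the bookkeeping of the unit contribution $w_1=1$, which is exactly what produces the additive $1$ distinguishing the even and odd versions of both formulas.
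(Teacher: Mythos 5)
Your proposal is correct and follows essentially the same route as the paper: the paper obtains both expressions for $q$ from the minimal winning character of exactly the two coalitions you name (one peone together with all even players, and the set of all odd players), with homogeneity forcing each to have weight $q$, and the $w(N)$ identities then follow immediately from $w(N)=2q-1$ (equivalently, from summing the odd-type and even-type weights over the partition of $N$). Your bookkeeping of the unit term $w_1=1$ and the consistency check match the paper's reasoning; there is no substantive difference.
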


\subsection{The structure of the incidence matrix of a $P$ game}\label{sect:32}

The incidence matrix $A$ upon minimal winning coalitions of a $P$ game is the square binary
$n\times n$ matrix obtained through an association of players
to columns (in order of non decreasing weights), and of minimal
winning coalitions to rows (in the order induced by the one to
one correspondence explained in the previous subsection) and
putting $a_{ij}=1$ if player $j$ is a member of the minimal
winning coalition $i$ (or $S_i$ associated to player $i$), and
$a_{ij}=0$ otherwise.

The square submatrix $M$ obtained from $A$ by deleting the last
row and the last column turns out to be a matrix with a special
structure.

Denoting by $B_{r,c}$, with $r,c=1,\ldots,h-1$ the block
(dimension $x_r\times x_c$) associated to coalitions of type
$r$ and players of type $c$, it turns out that:
\begin{prop}\label{prop:321}
Diagonal blocks $(r=c)$ are square identity matrices;
\end{prop}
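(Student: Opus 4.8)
The plan is to unpack the definition of the incidence matrix together with the one-to-one correspondence rule, and show that within a single type the correspondence forces exactly the identity pattern on the diagonal block. Fix a type $r$ with $1 \le r \le h-1$, and consider the $x_r$ players of type $r$ together with the $x_r$ minimal winning coalitions of type $r$ associated to them. By the correspondence rule (Prop. stated earlier), each non-top player $j$ of type $r$ is matched to the coalition $S_j$ consisting of $j$ itself together with all players of alternative parity and strictly greater weight. The diagonal block $B_{r,r}$ records, for each type-$r$ coalition (row) and each type-$r$ player (column), whether that player sits in that coalition.

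First I would verify the diagonal entries. For a type-$r$ player $j$, the coalition $S_j$ contains $j$ by construction, so $a_{jj}=1$; this places ones on the diagonal of $B_{r,r}$. Next I would examine the off-diagonal entries of the block: take two distinct players $j,k$ both of type $r$, and ask whether player $k$ belongs to coalition $S_j$. Here the key observation is that $j$ and $k$ share the same type, hence the same parity and the same weight $w_r$. The coalition $S_j$ is built from players of \emph{alternative} parity to $j$ and greater weight, plus $j$ itself; since $k \ne j$ has the \emph{same} parity as $j$, player $k$ cannot qualify as a member of alternative parity, and since $k$ has weight equal to (not greater than) $w_r$, it also fails the strict weight inequality. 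Therefore $k \notin S_j$ and $a_{jk}=0$ for $j \ne k$ within the block. Combining the diagonal ones with the vanishing off-diagonal entries yields $B_{r,r}=I_{x_r}$, the $x_r$-dimensional identity.

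The main obstacle, such as it is, lies in handling the parity bookkeeping cleanly and in confirming that the argument is uniform across all types $r=1,\ldots,h-1$, including the delicate boundary cases. The peones (type $1$) and the vice-top players (type $h-1$) must be checked to behave like every intermediate type; the constraints $x_1>1$ and $x_{h-1}>1$ guarantee these blocks are genuinely larger than $1\times 1$, so the off-diagonal reasoning is not vacuous there. I would also confirm that the top player and the top coalition are correctly excluded, since $M$ is obtained by deleting the last row and column of $A$, so the argument need only range over non-top types and never invokes the special rule attached to the top player. Because the whole argument rests on the single fact that two players of the same type share parity and weight, no computation with the recursive weight formulas \eqref{Eqn:recursive} is required, and the proof is essentially a direct reading of the correspondence rule.
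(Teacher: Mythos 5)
Your proof is correct and matches the reasoning the paper implicitly relies on: the paper states this proposition without proof (as one of the structural facts recalled from Isbell's work), and the intended justification is exactly your direct reading of the one-to-one correspondence rule — player $j$ of type $r$ belongs to its own coalition $S_j$, while any other player of the same type fails both the alternative-parity and the strictly-greater-weight requirements, so $B_{r,r}=I_{x_r}$. Your added remarks on the boundary types and on excluding the top player are harmless but not needed, since a $1\times 1$ block satisfies the claim trivially and $M$ by definition omits the top row and column.
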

\begin{prop}\label{prop:322}
Those with $c<r<h$, i.e. under the diagonal, are rectangular
null matrices;
\end{prop}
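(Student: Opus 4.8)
The plan is to read the claim off directly from the general correspondence rule (the ``main Isbell result'' describing the one-to-one correspondence between players and minimal winning coalitions) together with the strict monotonicity $w_t<w_{t+1}$ of the type weights. I would fix an arbitrary below-diagonal block $B_{r,c}$ with $c<r<h$. Because $r<h$, every coalition indexing a row of this block is associated to a \emph{non-top} player, so the first (non-top) form of the correspondence rule applies throughout: a type-$r$ coalition $S_i$ consists of the single type-$r$ player $i$ to which it is associated, together with all players of alternative parity \emph{and greater weight}.

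The heart of the argument is to translate ``greater weight'' into a statement about types. Since the type weights are strictly increasing in the type index, a player has greater weight than a type-$r$ player precisely when his type exceeds $r$. Hence every member of $S_i$ other than player $i$ has type strictly greater than $r$, while player $i$ itself has type $r$. In short, all members of any type-$r$ coalition have type at least $r$; note that the parity restriction in the rule is not even needed here, as it only sharpens which of the higher-type players appear and plays its role in the above-diagonal blocks instead.

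It then remains only to conclude. A column player $j$ of type $c$ with $c<r$ has type strictly below $r$, so by the previous paragraph $j$ cannot belong to any type-$r$ coalition $S_i$; thus $a_{ij}=0$. As this holds for every row $i$ of type $r$ and every column $j$ of type $c$, the entire block $B_{r,c}$ vanishes, which is exactly the assertion.

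I do not expect any genuine obstacle here: the proposition is an immediate corollary of the correspondence rule once ``greater weight'' is rephrased as ``higher type'' via strict monotonicity. The only point requiring a moment's care is confirming that the rule's non-top form is the relevant one across the whole block, which is guaranteed by the restriction $r<h$; the top coalition, governed by the second form of the rule, sits in the deleted last row of $A$ and so never enters the submatrix $M$.
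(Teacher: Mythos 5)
Your proof is correct and follows exactly the route the paper intends: the paper presents this proposition as a straightforward consequence of Isbell's correspondence rule, and your argument simply makes that explicit, namely that for $r<h$ the non-top form of the rule applies, strict monotonicity of type weights turns ``greater weight'' into ``higher type,'' so every member of a type-$r$ coalition has type at least $r$ and the below-diagonal blocks vanish. Your closing observation that the top coalition (which does contain low-type players) is excluded because it lies in the deleted last row of $A$ is also the right point of care and matches the paper's setup for $M$.
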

\begin{prop}\label{prop:323}
Those with $r<c<h$ are rectangular with elements identically
equal to 1(0) if $(r+c)$ is odd (even).
\end{prop}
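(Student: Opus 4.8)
The plan is to reduce the block-level statement to a single pointwise membership test and then observe that the test returns the same answer throughout the block. Fix $r$ and $c$ with $r<c<h$, and consider the entry of $M$ sitting in block $B_{r,c}$ at the intersection of the row associated to a coalition $S_i$ (with player $i$ of type $r$) and the column associated to a player $j$ of type $c$. By construction of the incidence matrix this entry is $a_{ij}$, which equals $1$ precisely when $j\in S_i$ and $0$ otherwise. Since $r<h$, player $i$ is a non-top player, so by the general one-to-one correspondence rule the coalition $S_i$ consists of player $i$ together with all players of alternative parity and greater weight than $i$. Everything therefore comes down to deciding whether the type-$c$ player $j$ satisfies these two membership conditions.

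First I would dispose of the weight condition. Because types are strictly ordered in weight and $r<c$, we have $w_r<w_c$; hence every type-$c$ player $j$ has strictly greater weight than every type-$r$ player $i$, and the ``greater weight'' requirement is automatically met for every entry of the block. (This is precisely the asymmetry that distinguishes the present case from Prop.~\ref{prop:322}, where $c<r$ forces $w_c<w_r$, the weight condition fails everywhere, and the block is null.) What remains is the parity condition: $j$ lies in $S_i$ if and only if $j$ has parity opposite to $i$, i.e. the type index $c$ has parity opposite to $r$. This happens exactly when $r+c$ is odd.

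Putting the two observations together, for $r<c<h$ the entry $a_{ij}$ equals $1$ for every choice of type-$r$ coalition $i$ and type-$c$ player $j$ when $r+c$ is odd, and equals $0$ for every such choice when $r+c$ is even. Since the conclusion does not depend on the particular representatives $i$ and $j$ chosen inside their types, the whole block $B_{r,c}$ is identically $1$ or identically $0$ according to the parity of $r+c$, as claimed. I do not expect a genuine obstacle here: the argument is a direct unwinding of the correspondence rule, and the only point demanding minor care is to confirm that the ``greater weight'' clause holds uniformly across the block, so that no entry is forced to $0$ by a weight comparison — and this is guaranteed by the strict monotonicity $w_r<w_c$ for $r<c$ recorded in the type-weight ordering.
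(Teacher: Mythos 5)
Your proof is correct and takes essentially the same approach as the paper: the paper states Prop.~\ref{prop:323} without a separate proof, presenting it as a straightforward consequence of the one-to-one correspondence rule of section 3.1, and your entrywise unwinding of that rule --- strict monotonicity of type weights settling the ``greater weight'' clause uniformly for $r<c$, and the parity of $r+c$ settling the ``alternative parity'' clause --- is exactly that derivation. Your parenthetical contrast with Prop.~\ref{prop:322} (where $c<r$ makes the weight clause fail everywhere) is also the correct reading of the null blocks below the diagonal.
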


Moreover keeping in consideration also the last row and column
of $A$ there are other blocks with one or both $r,c$ equal to
$h$:
\begin{prop}\label{prop:324}
For $c=h$ the elements of the blocks are still identically
equal to 1 (0) if $(r+c)$ is odd (even);
\end{prop}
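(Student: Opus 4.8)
The plan is to read off the last column of $A$ directly from the one-to-one correspondence rule of Subsection 3.1, exactly as was done for the interior above-diagonal blocks in Proposition \ref{prop:323}. For $c=h$ the block $B_{r,h}$ records, for each type-$r$ coalition, whether the (unique) top player belongs to it; since $x_h=1$, this block is a single column of height $x_r$, so the task reduces to deciding the membership of the top player in the coalitions of type $r$ for each $r<h$.

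First I would recall that the type weights are strictly increasing, so $w_r<w_h$ for every $r<h$ and the top player has strictly greater weight than every player of a lower type. Next, by the correspondence rule, a coalition of type $r<h$ is formed by its associated type-$r$ player together with all players of alternative parity and greater weight. Applying this to the top player: being of greater weight than any type-$r$ player, it is included in the type-$r$ coalition precisely when it is of parity opposite to $r$, that is, when $h$ and $r$ have opposite parities, equivalently when $r+h$ is odd; when $r+h$ is even the top player is excluded. Crucially, this verdict depends only on the parities of $r$ and $h$, not on which of the $x_r$ type-$r$ coalitions is considered, because every type-$r$ player has smaller weight than the top player. Hence all $x_r$ entries of $B_{r,h}$ coincide, giving a column of all ones when $r+h$ is odd and a column of all zeros when $r+h$ is even.

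This is the very parity rule governing the interior blocks of Proposition \ref{prop:323}, now extended to the last column, and the argument is essentially a direct transcription of the correspondence rule rather than a fresh computation. The only point requiring attention is that the statement concerns the non-top coalitions $r<h$: the top player is inserted into a lower-type coalition by exactly the same ``alternative parity and greater weight'' clause that governs any higher-type player, with no special exception, so no separate case analysis is needed here. The top coalition itself (the row $r=h$), which instead obeys the ``all odd players'' clause and may or may not contain the top player, lies outside the scope of this proposition and is handled separately.
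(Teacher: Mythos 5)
Your proof is correct and follows exactly the route the paper intends: Proposition \ref{prop:324} is stated there as a straightforward consequence of Isbell's one-to-one correspondence rule, and your argument (the top player, having strictly greater weight than any type-$r$ player, enters a type-$r$ coalition precisely when $h$ and $r$ have opposite parity, uniformly over all $x_r$ coalitions of that type) is precisely that derivation. You also correctly delimit the scope, noting that the corner entry $a_{nn}$ belongs to the row $r=h$ governed by Proposition \ref{prop:325}, which matches how the paper itself applies the two propositions in the proof of Theorem \ref{teo:42}.
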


\begin{prop}\label{prop:325}
For $r=h$ the elements are identically equal to 1
(0) for $c$ odd (even).
\end{prop}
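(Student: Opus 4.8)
The plan is to read off the last row of $A$ directly from the one-to-one correspondence rule recalled above together with the definition of the incidence matrix. The row indexed by $r=h$ is, by construction, the row associated with the top player, and hence corresponds to the minimal winning coalition $S_h$ that the general rule attaches to the top player; by that very rule $S_h$ is precisely the coalition of all odd players. Thus the whole content of the claim reduces to the single membership question ``is a type-$c$ player an element of $S_h$?'', whose answer depends only on the parity of $c$.

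First I would fix the column indexing: columns are arranged in blocks $c=1,\ldots,h$ by non-decreasing weight, so all $x_c$ players sitting in column block $c$ share the same type and therefore the same parity. Since $S_h$ consists of all odd players and of no even player, every type-$c$ player belongs to $S_h$ when $c$ is odd and fails to belong to it when $c$ is even. By the defining rule $a_{hj}=1 \Leftrightarrow j\in S_h$, this yields $a_{hj}=1$ for every column $j$ in an odd block and $a_{hj}=0$ for every column $j$ in an even block. In particular the value is constant across each column block, which is exactly why the assertion can be stated at the block level, and it holds uniformly for $c=1,\ldots,h$.

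The only point deserving a word of care is the corner entry $a_{hh}$, namely whether the top player himself (type $h$, last column) belongs to $S_h$. Here the parenthetical ``which may or may not include the top player'' in the correspondence rule is nothing but the parity of $h$: if $h$ is odd the top player is odd and $a_{hh}=1$, if $h$ is even he is even and $a_{hh}=0$, so the dichotomy ``$c$ odd $\Rightarrow 1$, $c$ even $\Rightarrow 0$'' extends without exception to $c=h$. I do not expect any genuine obstacle. Unlike Propositions \ref{prop:323} and \ref{prop:324}, whose verification has to track the alternating $(r+c)$-parity pattern produced by the coalitions attached to non-top players, the last row is governed by the single, explicitly described top coalition, so the argument is essentially a one-line consequence of the correspondence rule once the column ordering has been made explicit.
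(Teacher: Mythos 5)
Your proof is correct and follows exactly the route the paper intends: the paper states Proposition \ref{prop:325} without an explicit argument (as a ``straightforward consequence'' of Isbell's correspondence rule), and your derivation---the last row is the coalition of all odd players attached to the top player, so membership of a type-$c$ player depends only on the parity of $c$, with the corner entry $a_{hh}$ settled by the parity of $h$---is precisely that consequence spelled out. Nothing is missing.
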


As we shall see in next sections, the incidence matrix is a
helpful tool to study a $P$ game and in particular the
connections between twin $P$ games.

\subsection{Twin relationships in $P$ games}

The special structure of the incidence matrices of $P$ games
inspired Isbell (\cite{Isb56}, p. 185, last indent) in recognizing an important property embedded
in the following:
\begin{prop}\label{prop:331}
The transposed $A^T$ of the incidence matrix of any (primal)
$P$ game $G$ is still the incidence matrix of a $P$ game
$\overline{G}$, to be seen as the twin (dual in Isbell
terminology) of $G$.
\end{prop}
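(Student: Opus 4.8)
The plan is to show that $A^T$ satisfies exactly the structural characterization that Propositions~\ref{prop:321}--\ref{prop:325} give for the incidence matrix of a $P$ game, and hence—by the fact that those block conditions uniquely determine a $P$ game via its free type representation—that $A^T$ is itself the incidence matrix of some $P$ game $\overline{G}$. The key observation driving everything is that transposition swaps the roles of rows and columns, i.e. of coalitions and players, so the block $B_{r,c}$ of $A$ (coalitions of type $r$, players of type $c$) becomes the block in position $(c,r)$ of $A^T$. I would first record that the ``interior'' submatrix $M$ (delete last row and column) has the block structure: identity on the diagonal, null below, and all-ones/all-zeros above according to the parity of $r+c$. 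Under transposition the diagonal identity blocks stay identity blocks, the null blocks below the diagonal ($c<r$) move to positions above the diagonal ($r<c$) and vice versa, and since $r+c=c+r$ the parity condition governing which off-diagonal blocks are all-ones is preserved.

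The heart of the argument is therefore to verify that this transposed block pattern is again of the admissible $P$-game form, which amounts to reading off a new free type representation. First I would observe that the diagonal blocks of $A^T$ are the transposes of $x_r$-dimensional identity matrices, hence again identity matrices, and that their sizes, read in order, give the transposed type numerousness. Then I would check the off-diagonal parity rule: in $A^T$ the block in position $(c,r)$ with $c<r$ inherits the value (all-ones or all-zeros) that $B_{r,c}$ had, namely ones exactly when $r+c$ is odd; this is precisely condition~\ref{prop:323} for $A^T$ because $r+c$ and $c+r$ have the same parity. So the interior of $A^T$ already has the correct block-diagonal upper-triangular alternating structure.

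The genuinely delicate step is the treatment of the last row and last column, i.e. the top player and the top coalition, because there the structural rules (Propositions~\ref{prop:324} and~\ref{prop:325}) are not symmetric in appearance: for $c=h$ the entry depends on the parity of $r+h$, while for $r=h$ it depends only on whether $c$ is odd. The hard part will be showing that transposition maps the last column of $A$ (governed by the $r+c$ parity rule) onto the last row of $A^T$ and that this last row obeys the ``$c$ odd'' rule required of a genuine $P$-game incidence matrix, and symmetrically for the last row of $A$ mapping to the last column of $A^T$. I would handle this by using the explicit description of the top coalition as the set of all odd players: the top column of $A$ records membership of the top player in each coalition, while the top row records the members of the top coalition, and one must verify that after transposition the parity bookkeeping matches. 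Here I expect to lean on the recursive weight relations~\eqref{Eqn2:first}--\eqref{Eqn2:second} and on the fact that the top player has the greatest weight and sits in type $h$, so that whether or not he belongs to the top (all-odd) coalition is determined precisely by the parity of $h$—and this is exactly the piece of data that makes the two corner conditions consistent under transposition.

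Finally, having established that $A^T$ meets all five structural conditions, I would invoke the earlier results that such a matrix is the incidence matrix of a unique $P$ game determined by its (free) type representation, and conclude that $A^T$ defines a legitimate $P$ game $\overline{G}$. Since transposition is an involution, $(A^T)^T=A$, the relationship is symmetric and $G$ is in turn the twin of $\overline{G}$, justifying the ``twin'' terminology. I would close by noting that the new free type representation $_f\overline{\x}$ is obtained by reading the diagonal block sizes of $A^T$ in order, which is the symmetry of free type representations alluded to in the footnote and formalized later in Proposition~\ref{prop:333}.
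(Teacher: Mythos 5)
Your overall strategy---verifying that the transpose satisfies the block characterization of Propositions \ref{prop:321}--\ref{prop:325} and then appealing to the fact that an admissible free type vector determines a unique $P$ game---is the natural one (the paper itself offers no proof of Prop.~\ref{prop:331}, attributing it to Isbell; its two Remarks, Prop.~\ref{prop:332} and the modified transpose $A^{\tau}$ supply exactly the bookkeeping such a proof needs). But the central step of your argument is wrong. Transposition does not preserve the upper-triangular block structure: the block of $A^T$ in position $(c,r)$ with $c<r$, i.e.\ \emph{above} the diagonal of $A^T$, is the transpose of $B_{r,c}$ with $c<r$, which by Prop.~\ref{prop:322} is a \emph{null} block of $A$; it is not ``ones exactly when $r+c$ is odd''. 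Dually, the parity-alternating blocks of $A$ land \emph{below} the diagonal of $A^T$. Hence $M^T$ is block diagonal \emph{lower} triangular with the ordering of types reversed (this is precisely the content of Prop.~\ref{prop:332}), and your assertion that ``the interior of $A^T$ already has the correct block-diagonal upper-triangular alternating structure'' is false. The missing idea is the order reversal of the non-top types: one must identify the type-$t$ rows and columns of $A^T$ with type $h-t$ in $\overline{G}$ (equivalently, pass to $A^{\tau}$ by inverting the order of all but the last entry of every row and then of every column). Only after this double reversal is the structure of Props.~\ref{prop:321}--\ref{prop:323} recovered---the parity rule survives because $(h-r)+(h-c)=2h-(r+c)$ has the same parity as $r+c$---and the two corner rules are exchanged consistently: ``ones iff $c$ odd'' in the last row of $A$ becomes, in the last column of the relabelled matrix, ``ones iff $h-\overline{r}$ odd'', i.e.\ iff $\overline{r}+h$ is odd, matching Prop.~\ref{prop:324}, and conversely for Prop.~\ref{prop:325}.

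The same oversight resurfaces in your closing sentence: reading the diagonal block sizes of $A^T$ ``in order'' gives $x_1,\ldots,x_{h-1}$, the very same vector as for $A$, which would force $\overline{G}=G$ for every game and contradicts Prop.~\ref{prop:333}, which states $\overline{x}_t=x_{h-t}$. Once the reversal is inserted, the rest of your plan does go through: the reversed vector is again admissible ($\overline{x}_1=x_{h-1}>1$, $\overline{x}_{h-1}=x_1>1$, $\overline{x}_h=x_h=1$), so by the paper's recalled results it is the free type representation of a genuine $P$ game, whose incidence matrix in standard ordering is exactly $A^{\tau}$, i.e.\ $A^T$ up to relabelling. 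Note also that your third paragraph, which correctly identifies the corner blocks as the delicate point, only announces that verification (``I would handle this\ldots I expect to lean on\ldots'') rather than carrying it out; as written, the proposal therefore both asserts a false intermediate claim and leaves the genuinely needed computation undone.
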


\begin{rem}
It is important to underline that in $A^T$ the association of
all non top players to columns and of the corresponding minimal
winning coalitions to rows holds just in the reverse order
(from the last but most powerful in the game to the least powerful) than
the standard one (which goes from the least powerful to the
last but most powerful). In more detail in $A^T$ for any
$t=1,\ldots,h-1$ and $r<h$ the block indexed $B_{r,t}$ is
associated to the set of players of type $h-t$ and to
coalitions of type $h-r$ in $\overline{G}$; only the last
column still remains associated to the top player (for $r<h$
$B_{r,h}$ concerns coalitions of type $h-r$ and the top player
in $\overline{G}$) and the last row to the coalition
corresponding to the top player (for $r<h$ $B_{h,r}$ concerns
players of type $h-r$ and the coalition of type $h$ in
$\overline{G}$).
\end{rem}

\begin{rem}
For all $t<h$ the number of players (and of the associated
coalitions) of type $t$ in $G$ is the number of players (and of
the associated coalitions) of type $h-t$ in $\overline{G}$.
\end{rem}

Hence, more formally:
\begin{prop}\label{prop:332}
$M^T$, the transposed matrix of the submatrix $M$ (of $A$),
should be the incidence submatrix $\overline{M}$ of another $P$
game $\overline{G}$, but now with $\overline{M}$ block diagonal
lower triangular with ordering of coalitions and players
inverted (i.e. non increasing) with respect to that of $M$. To
obtain $\overline{A}$ complete the transposition of the last
row and column.
\end{prop}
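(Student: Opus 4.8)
The plan is to obtain the structure of $\overline{M}=M^{T}$ directly from that of $M$, and then to verify that, read in the reversed ordering, it is the canonical incidence submatrix of an admissible $P$ game. By Propositions \ref{prop:321}--\ref{prop:323} the matrix $M$, written in block form with blocks $B_{r,c}$ ($r,c=1,\ldots,h-1$), is block upper triangular: the diagonal blocks are identities, the blocks with $r>c$ vanish, and the blocks with $r<c$ are all-ones or all-zeros according as $r+c$ is odd or even. First I would transpose block by block, using that the $(i,j)$ block of $M^{T}$ is the transpose of the $(j,i)$ block of $M$. Since $I^{T}=I$, the transpose of a null block is null and the transpose of an all-ones block is all-ones, each of the three admissible block types is preserved individually; moreover the parity test ``$r+c$ odd'' is symmetric in its arguments. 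Hence $M^{T}$ is block lower triangular, with identity diagonal blocks, null blocks strictly above the diagonal, and all-ones blocks strictly below the diagonal exactly when the sum of the two type indices is odd. This already establishes the asserted block diagonal lower triangular shape.

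Next I would perform the relabelling $t\mapsto h-t$, i.e.\ reverse the order of the block-rows and of the block-columns, which is precisely the reindexing recorded in the two Remarks following Proposition \ref{prop:331} (type $t$ of $\overline{G}$ carries the players and coalitions of type $h-t$ of $G$). Under this reversal the $(i',j')$ block of the relabelled matrix equals $B_{h-j',\,h-i'}^{T}$, and a short case analysis returns exactly the canonical form of Propositions \ref{prop:321}--\ref{prop:323}: the diagonal $i'=j'$ gives $I$; for $i'>j'$ the pair $(h-j',h-i')$ sits below the diagonal of $M$, so the block is null; and for $i'<j'$ it sits above the diagonal of $M$, so the block is all-ones precisely when $(h-j')+(h-i')$ is odd, that is (since $2h$ is even) precisely when $i'+j'$ is odd. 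Thus in the reversed (non-increasing) ordering $M^{T}$ is block upper triangular with the same diagonal and parity pattern as a genuine $M$ matrix, confirming that it is the incidence submatrix of a $P$ game $\overline{G}$ listed with players and coalitions in non-increasing weight order.

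It remains to check that $\overline{G}$ is admissible and then to reinstate the deleted row and column. For the first point I would compute the free type representation of $\overline{G}$ in the sense of Definition \ref{def:31}: by the type correspondence above it is the reverse ${}_f\overline{\x}=(x_{h-1},\ldots,x_{1})$ of ${}_f\x$, while the unique top player of $G$ remains the unique top player of $\overline{G}$, so $\overline{x}_{h}=1$. The admissibility constraints for $\overline{G}$ then read $\overline{x}_{1}=x_{h-1}>1$ and $\overline{x}_{h-1}=x_{1}>1$, and these hold because $G$ itself satisfies $x_{1}>1$ and $x_{h-1}>1$; feeding ${}_f\overline{\x}$ into the recursion \eqref{Eqn:recursive} therefore produces a strictly increasing weight sequence, so $\overline{G}$ is a well-defined $P$ game whose incidence submatrix is $M^{T}$. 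Finally I would complete $M^{T}$ to $\overline{A}=A^{T}$ by transposing the last row and column of $A$, and verify by the same parity bookkeeping, now applied to Propositions \ref{prop:324}--\ref{prop:325}, that the restored top row and column match the pattern prescribed for $\overline{G}$.

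The step I expect to demand the most care is the index arithmetic of the relabelling: one has to confirm simultaneously that reversing both orders turns block lower triangular into block upper triangular form, that the parity condition survives the substitution $r+c\mapsto(h-r)+(h-c)$, and that the distinguished top (type $h$) row and column keep obeying the special rules of Propositions \ref{prop:324}--\ref{prop:325} rather than the generic off-diagonal rule.
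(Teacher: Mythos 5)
Your proof is correct, but it follows a genuinely more self-contained route than the paper, which in fact offers no proof of this proposition at all: the statement appears in the recall section, presented (``Hence, more formally:'') as a formal repackaging of Proposition \ref{prop:331} --- Isbell's theorem that $A^T$ is again the incidence matrix of a $P$ game --- together with the two Remarks describing the reindexing $t\mapsto h-t$. You instead derive everything from the block structure of Propositions \ref{prop:321}--\ref{prop:325}: transposition preserves each of the three block types and the parity test ``$r+c$ odd'' is symmetric, so $M^T$ is block lower triangular; reversing both block orders sends block $(i',j')$ to $B_{h-j',\,h-i'}^T$, and since $2h$ is even this restores exactly the canonical upper-triangular pattern; admissibility of $\overline{G}$ holds because its free type representation is the reversal $(x_{h-1},\ldots,x_1)$ of that of $G$ and the end constraints $x_1>1$, $x_{h-1}>1$ are themselves symmetric under reversal. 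In effect you re-prove Isbell's Proposition \ref{prop:331} rather than citing it, and your relabelling computation delivers Proposition \ref{prop:333} ($\overline{x}_t=x_{h-t}$) as a by-product, which the paper states separately; this is what your approach buys --- a verification the paper leaves implicit in Isbell's 1956 argument. The only step you leave as ``would verify'' is the last row and column, and you should write it out, since it is two lines: in the relabelled $A^T$ the block of the last column in block-row $i'$ is $(B_{h,\,h-i'})^T$, all ones iff $h-i'$ is odd by Proposition \ref{prop:325} applied to $A$, which agrees with the condition ``$i'+h$ odd'' of Proposition \ref{prop:324} for $\overline{A}$ because $h-i'$ and $h+i'$ differ by the even number $2i'$; symmetrically, the block of the last row in block-column $j'$ is $(B_{h-j',\,h})^T$, all ones iff $(h-j')+h$ is odd, i.e.\ iff $j'$ is odd, as Proposition \ref{prop:325} requires for $\overline{A}$.
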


If we now wish to recover the standard ordering in
$\overline{A}$, we should transform $A^T$ in the following way
to obtain a modified transposed $A^{\tau}$: at first rewrite
all rows of $A^T$ inverting the order of all but the last entry
in each row, then rewrite all columns inverting the order of
all but the last entry in each column. It turns out that:
\begin{risu}
\begin{equation}\label{Eqn:ris3.1}
\begin{split}
a_{i,j}^{T}&=a_{n-i,n-j}^{\tau} \hspace{0.2cm} \text{for any $i,j<n$}\\
a_{i,j}^{T}&=a_{n-i,n}^{\tau} \hspace{0.4cm} \text{for any $i<n$}\\
a_{n,j}^{T}&=a_{n,n-j}^{\tau} \hspace{0.4cm} \text{for any $j<n$}\\
a_{n,n}^{T}&=a_{n,n}^{\tau} 
\end{split}
\end{equation}
\end{risu}

After these modifications $A^{\tau}$ turns out to be the
incidence matrix $\overline{A}$ of $\overline{G}$ coherent with
the standard ordering convention of players and coalitions. To
understand the point see also the examples in section \ref{Sect:examples}.

We signal that an alternative, easier and more immediate
understanding of the connection between a couple of twin P
games, is given\footnote{As suggested by a rather cryptic
sentence in \cite{Isb56}, last row, p. 185, on the point see
also \cite{PPZ1}, sect. 3, p. 7.} through their free type
representations and is resumed by:

\begin{prop}\label{prop:333}
For any $t=1,\ldots,h-1$, $\overline{x_t}=x_{h-t}$.
\end{prop}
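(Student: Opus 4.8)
The plan is to read off each game's type representation directly from the sizes of the diagonal blocks of its incidence submatrix, and then to track those blocks across the transposition that links the two twins. By Proposition~\ref{prop:321} the $t$-th diagonal block of $M$ is the $x_t\times x_t$ identity, so the size of that block equals $x_t$; applying the same proposition to $\overline{G}$, the size of the $t$-th diagonal block of $\overline{M}$ equals $\overline{x_t}$. Hence it will suffice to show that the ordered sequence of diagonal-block sizes of $\overline{M}$ is the reverse of that of $M$, and the identity $\overline{x_t}=x_{h-t}$ will follow at once.

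To do this I would first invoke Proposition~\ref{prop:331} to identify $\overline{G}$ as the $P$ game carried by $A^{T}$, and then pass to the reindexed matrix $A^{\tau}=\overline{A}$ of \eqref{Eqn:ris3.1} in order to respect the standard ordering convention in the twin. On the non-top part the relations \eqref{Eqn:ris3.1} give $\overline{a}_{k,l}=a_{n-l,\,n-k}$, so the passage from $A$ to $\overline{A}$ is transposition followed by the reversal $i\mapsto n-i$ of the first $n-1$ indices. Transposition sends a diagonal block to a diagonal block and, the identity being symmetric, keeps it an identity; the reversal again preserves the diagonal. Consequently each diagonal identity block of $M$ reappears as a diagonal identity block of $\overline{M}$ of the very same size, only relocated. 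Since type $t$ occupies the contiguous index set $I_t$ and the reversal carries $I_t$ to $n-I_t$, the block of type $1$ (smallest indices) is sent to the bottom and the block of type $h-1$ (largest indices, as $\sum_{s\le h-1}x_s=n-1$) to the top. Reading the diagonal-block sizes of $\overline{M}$ from the top therefore yields $x_{h-1},x_{h-2},\ldots,x_1$, whence $\overline{x_t}=x_{h-t}$ for every $t=1,\ldots,h-1$.

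The only delicate point will be the bookkeeping confirming that the reversal $i\mapsto n-i$ reverses the block order exactly, and that the top type $h$---fixed at index $n$, carrying the single player $x_h=1$, and excluded from $M$---plays no part, in agreement with the claim being stated only for $t\le h-1$. Writing the endpoints of $I_t$ as $\sum_{s<t}x_s+1$ and $\sum_{s\le t}x_s$ and using $n-1=\sum_{s\le h-1}x_s$, a one-line computation confirms that $n-I_t$ is precisely the index set of type $h-t$ in $\overline{M}$. This closes the argument without circularity, since the block-diagonal identity structure of $\overline{M}$ is guaranteed independently by Propositions~\ref{prop:332} and~\ref{prop:321}.
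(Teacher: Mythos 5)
Your argument is correct and is essentially the paper's own route: the paper does not prove Prop.~\ref{prop:333} formally, but derives it from the remarks following Prop.~\ref{prop:331}, which record exactly what you verify --- that under transposition (reindexed as in \eqref{Eqn:ris3.1}) the type-$t$ block of $G$ becomes the type-$(h-t)$ block of $\overline{G}$ with its size $x_t$ unchanged. Your bookkeeping with the diagonal identity blocks of $M$ and $\overline{M}$ and the reversal $i\mapsto n-i$ simply makes that order-reversal explicit.
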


The proposition says that a couple of twins share the same
value of $n$ and $h$ and that their free type representation vectors
are each other symmetric.

\section{The minimal winning quota in twin $P$ games}

In this section we will show that:

\begin{teo}\label{teo:41}
Let $G$, $\overline{G}$ be any couple of twin $P$ games and $q$
and $\overline{q}$ their minimal winning quotas; then $q=\overline{q}$.
\end{teo}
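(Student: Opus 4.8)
The plan is to route everything through an auxiliary determinant identity, exactly as announced in the introduction: for \emph{any} $P$ game $G$ with incidence matrix $A$ and minimal winning quota $q$, one has $|\det A|=q$. Granting this, Theorem \ref{teo:41} is immediate. By Proposition \ref{prop:331} the incidence matrix $\overline A$ of the twin $\overline G$ is obtained from $A^{T}$ by the reordering of Proposition \ref{prop:332} and Result \eqref{Eqn:ris3.1}, i.e.\ by permuting rows and columns; hence $\overline A=P\,A^{T}Q$ for suitable permutation matrices $P,Q$, so that $|\det\overline A|=|\det A^{T}|=|\det A|$. Applying the identity to both members of the couple then yields $\overline q=|\det\overline A|=|\det A|=q$, which is the claim.

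First I would establish the determinant identity. The key input is homogeneity: since every minimal winning coalition has weight $q$ in the minimal homogeneous representation, the incidence matrix satisfies $A\,\mathbf{w}=q\,\mathbf{1}$, where $\mathbf{w}=(w_1,\dots,w_n)^{T}$ is the column of individual weights and $\mathbf{1}$ the all-ones vector. Reading this as $q\,\mathbf{1}=\sum_j w_j\,\mathbf{c}_j$, with $\mathbf{c}_j$ the $j$-th column of $A$, and using multilinearity in the first column gives $\det[\,q\mathbf{1},\mathbf{c}_2,\dots,\mathbf{c}_n]=\sum_j w_j\det[\mathbf{c}_j,\mathbf{c}_2,\dots,\mathbf{c}_n]$; every term with $j\neq 1$ carries a repeated column and vanishes, leaving $w_1\det A=\det A$. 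The same determinant equals $q\,\det A^{(1)}$, where $A^{(1)}$ is $A$ with its first (weight-one peon) column replaced by $\mathbf{1}$. Hence $\det A=q\,\det A^{(1)}$ --- here the normalization $w_1=1$ is used decisively --- and the identity reduces to the unimodularity statement $|\det A^{(1)}|=1$.

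The remaining computation exploits the block structure of Section \ref{sect:32}. Write $A=\left(\begin{smallmatrix}M&\mathbf{p}\\ \mathbf{r}^{T}&d\end{smallmatrix}\right)$, where $M$ is the submatrix on non-top players and coalitions, $\mathbf{p},\mathbf{r}$ collect the last column and row, and $d=a_{nn}$. By Propositions \ref{prop:321}--\ref{prop:323}, $M$ is block upper triangular with identity diagonal blocks, so $\det M=1$ and $M$ is invertible; the Schur complement gives $\det A=d-\mathbf{r}^{T}M^{-1}\mathbf{p}$. Since $\mathbf{p},\mathbf{r}$ are constant on each type block (Propositions \ref{prop:324}--\ref{prop:325}), the system $M\mathbf{z}=\mathbf{p}$ solves by block back-substitution from type $h-1$ downward through the one-line recursion $z_r=p_r-\sum_{c>r,\;r+c\ \mathrm{odd}}x_c\,z_c$, with $\det A=d-\sum_{r\ \mathrm{odd}}x_r\,z_r$. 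I would then verify that this recursion is, up to sign, the very recursion \eqref{Eqn:recursive} generating the type weights from the free type representation, so that the back-substitution telescopes to $\det A=\pm q$ (equivalently $|\det A^{(1)}|=1$); in fact the $z_r$ turn out to be, up to sign, the twin weights $w_{h-r}$, a pleasant cross-check with Proposition \ref{prop:333}.

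The main obstacle is precisely this last structural determinant computation: one must track the parity-dependent alternation of all-ones and null blocks (Propositions \ref{prop:323}--\ref{prop:325}) together with the coupling introduced by the last row and column, and confirm that the back-substitution recursion coincides with \eqref{Eqn:recursive}. Once $|\det A|=q$ is secured, the passage to twins is essentially free: transposition preserves the determinant, and the reordering of Result \eqref{Eqn:ris3.1} is a permutation, so it affects only a sign that the absolute value discards.
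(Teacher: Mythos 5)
Your top-level architecture is the paper's own: establish a determinant identity for the incidence matrix (this is the paper's Theorem \ref{teo:42}) and then conclude $q=\overline{q}$ from invariance of $|\det|$ under transposition and under the reordering permutation. Several of your steps are correct and even add something: the reduction $\det A=q\,\det A^{(1)}$ via $A\mathbf{w}=q\mathbf{1}$ and multilinearity is valid (though you never actually use it afterwards), the Schur complement $\det A=d-\mathbf{r}^{T}M^{-1}\mathbf{p}$ with $\det M=1$ is correct, and so is the block back-substitution recursion for $M\mathbf{z}=\mathbf{p}$. The genuine gap is the claim that the back-substitution ``telescopes to $\det A=\pm q$''. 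It does not: it telescopes to $\pm\overline{q}$, the \emph{twin's} quota. As you yourself observe, the solution of $M\mathbf{z}=\mathbf{p}$ is, up to alternating signs, the vector of the twin's type weights: $z_{r}=(-1)^{h-r+1}\,\overline{w}_{h-r}$, because your recursion is exactly \eqref{Eqn2:first}--\eqref{Eqn2:second} run on the reversed vector $(x_{h-1},\dots,x_{1})=(\overline{x}_{1},\dots,\overline{x}_{h-1})$ of Prop.\ \ref{prop:333}. Substituting $x_{r}=\overline{x}_{h-r}$, the output $d-\sum_{r\ \mathrm{odd}}x_{r}z_{r}$ assembles the twin's quota formulas: it equals $1+\sum_{t\ \mathrm{even}}\overline{x}_{t}\overline{w}_{t}=\overline{q}$ when $h$ is odd, and $-\sum_{t\ \mathrm{odd}}\overline{x}_{t}\overline{w}_{t}=-\overline{q}$ when $h$ is even. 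Concretely, in the paper's Ex.\ 1 one finds $(z_{1},z_{2},z_{3},z_{4})=(-7,5,-2,1)$, i.e.\ $\pm$ the twin weights $7,5,2,1$, not the weights $1,3,4,11$ of $G$; no identity of $G$ alone certifies this value as $q$, so equating it with $q$ at that point presupposes $q=\overline{q}$, which is Theorem \ref{teo:41} itself. As written, the middle step is circular.

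The repair is cheap, in either of two ways. (i) Eliminate with the rows instead of the columns, i.e.\ solve $M^{T}\mathbf{z}=-\mathbf{r}$: this is what the paper does, the coefficients become $z_{t}=(-1)^{t}w_{t}$ with $G$'s \emph{own} weights, the telescoping invokes $G$'s own identities \eqref{Eqn2:first}--\eqref{Eqn2:second}, and one gets $|A|=q$ with no circularity. (ii) Keep your computation but state its true output, $|\det A|=\overline{q}$ for every $P$ game $G$; since the symmetry of Prop.\ \ref{prop:333} is an involution, the twin of $\overline{G}$ is $G$, so the same identity applied to $\overline{G}$ gives $|\det\overline{A}|=q$, while your permutation-plus-transposition step gives $|\det\overline{A}|=|\det A^{T}|=|\det A|$, whence $q=\overline{q}$. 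It is worth noting that the paper's row elimination and your column elimination of the same matrix produce $q$ and $\overline{q}$ respectively; placed side by side, those two computations prove the theorem on their own.
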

Theorem \ref{teo:41} is a straightforward corollary of the
following theorem linking the minimal winning quota and the
determinant of the incidence matrix of any $P$ game:
\begin{teo}\label{teo:42}
The incidence matrix $A$ of any $P$ game $G$ with $h$ types and
minimal winning quota $q$ is non singular and its determinant
is $|A|=(q)\cdot(-1)^{h+1}$.
\end{teo}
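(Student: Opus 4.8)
The plan is to compute $|A|$ by exploiting the block structure of the submatrix $M$ described in Propositions \ref{prop:321}--\ref{prop:325}, together with the recursive relations \eqref{Eqn:recursive} and the weight identities \eqref{Eqn2:first}--\eqref{Eqn2:second}. The key observation is that $M$ is block diagonal upper triangular with identity matrices on the diagonal, so the off-diagonal blocks are the only source of interaction; the full matrix $A$ differs from $M$ only through its last row and last column. First I would perform row and column reductions on $A$ that reflect the combinatorial passage from one minimal winning coalition to an ``adjacent'' one. Concretely, since consecutive rows of $A$ (coalitions of consecutive type parity) differ in a controlled way governed by the rule of Proposition~3.3, subtracting suitable rows should collapse the many all-ones rectangular blocks into sparse form, leaving a matrix whose determinant is readable by cofactor expansion along the final row or column.

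The cleanest route is probably an inductive argument on the number of types $h$ (or on $n$). The base case ($h=2$, the minimal configuration with peones and a vice-top/top structure) can be checked directly: there $A$ is small enough that $|A|=q$ up to sign follows by hand. For the inductive step I would strip off the blocks associated with the two highest types and relate the determinant of $A$ for an $h$-type game to that of the incidence matrix of an $(h-2)$-type game, using relations \eqref{Eqn1:third} and \eqref{Eqn1:forth}, namely $w_t = x_{t-1}w_{t-1} + w_{t-2}$. This recursion on the weights mirrors exactly the continued-fraction-like structure one expects in the determinant expansion, and the factor $(-1)^{h+1}$ should emerge from the two sign changes incurred each time two types (one odd, one even) are removed.

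Alternatively, and perhaps more transparently, I would try to exhibit a vector identity showing that $A$ acts on the weight vector $\w$ in a way that pins down $q$. By the homogeneity property every minimal winning coalition has weight exactly $q$, i.e.\ $A\w = q\cdot\1$ where $\1$ is the all-ones vector. This single relation says $q$ is the value taken by the linear form defined by each row on $\w$, and it strongly suggests computing $|A|$ via Cramer's rule: solving $A\w=q\1$ and comparing a known component of $\w$ (for instance $w_1=1$ from \eqref{Eqn1:first}) to the corresponding cofactor quotient. If the cofactor of the appropriate entry equals $|A|/q$ up to sign, the identity $|A|=q\cdot(-1)^{h+1}$ drops out after tracking the sign. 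I expect this to be the quickest path, since $A\w=q\1$ is essentially free from the homogeneity hypothesis.

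The main obstacle I anticipate is the sign bookkeeping, i.e.\ verifying that the accumulated sign is precisely $(-1)^{h+1}$ and not some other parity-dependent quantity. The magnitude $|A|=q$ should follow fairly mechanically from either the induction or the Cramer argument, but the alternating $1/0$ pattern in the off-diagonal and last-row/last-column blocks (Propositions \ref{prop:323}--\ref{prop:325}), which depends on the parity of $r+c$ and on whether types are odd or even, makes the determinant's sign sensitive to how many odd versus even types occur. I would therefore handle the sign by carefully ordering the row/column operations so that each reduction step contributes a predictable sign, and confirm the final parity against the small base cases before asserting the general formula.
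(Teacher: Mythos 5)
Your submission is a collection of three candidate strategies rather than a proof: each route stops exactly where the mathematical content of the theorem begins, namely the evaluation of a concrete $0/1$ determinant governed by the parity structure of Propositions \ref{prop:321}--\ref{prop:325}. Take the route you call the quickest, via $A\w=q\1$. That relation is indeed free (it is just homogeneity), and the adjugate identity $\det(A)\cdot w_1=\det(A_1)$ --- note you must use the adjugate form rather than Cramer's rule proper, since invertibility of $A$ is part of what has to be proved --- together with $w_1=1$ and linearity in the first column gives $|A|=q\cdot\det(B)$, where $B$ is $A$ with its first column replaced by the all-ones vector. But nothing has been gained: you still must prove $\det(B)=(-1)^{h+1}$, a determinant of exactly the same type and difficulty as $|A|$ itself, and this is where the alternating block pattern, the identities \eqref{Eqn2:first}--\eqref{Eqn2:second}, and the case distinction between $h$ even and $h$ odd must actually be deployed. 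Your text never performs this computation; the conclusion is asserted under an ``if'' and an ``I expect'', and the sign issue you correctly single out as the main obstacle is resolved in none of the three variants.

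The induction route has, in addition, a structural flaw: the class of $P$ games is not closed under deletion of the two highest (or two lowest) types. Stripping types $h$ and $h-1$ from the free type representation $(x_1,\ldots,x_{h-1})$ leaves $(x_1,\ldots,x_{h-3})$, whose last component need not exceed $1$; so the truncated matrix is in general not the incidence matrix of any $P$ game and the induction hypothesis cannot be invoked as stated. The base case $h=2$, where $A=\left(\begin{smallmatrix} I & \1 \\ \1^T & 0\end{smallmatrix}\right)$ and $|A|=-(n-1)=-q=q\cdot(-1)^{h+1}$, is also left unchecked. For comparison, the paper needs neither induction nor Cramer: it adds to the last row of $A$ the single linear combination $\sum_{i<n}z_i a_i$ with $z_i=(-1)^{t}w_i$ ($t$ the type of player/coalition $i$), shows via \eqref{Eqn:recursive} and \eqref{Eqn2:first}--\eqref{Eqn2:second} that this annihilates the first $n-1$ entries of that row, concludes $|A|=t_{nn}$ because $M$ is upper triangular with unit diagonal, and finally evaluates $t_{nn}=\mp q$ according to the parity of $h$ using Propositions \ref{prop:324} and \ref{prop:325}. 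If you wish to salvage your Cramer reduction, computing $\det(B)$ would require essentially this same elimination, so you may as well apply it to $A$ directly.
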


\begin{proof}
Let us denote by $a_i$, $i=1,\ldots,n$ the row vectors of $A$.
Recalling that the submatrix $M$ is upper triangular, we wish
to transform $A$ in a triangular matrix $T(A)$ with elements
$t_{ij}$ by adding to its last row a proper linear combination
$\sum_{i=1,\ldots,n-1}z_ia_i$ of the rows of $M$. This would
clearly imply that:
\begin{prop}\label{prop:41}
$$|T(A)|=|A|=t_{nn}$$
\end{prop}
We prove now:
\begin{prop}\label{prop:42}
The coefficients of such a linear combination are given by
$$z_i=(-1)^{t}\cdot w_i$$
if the coalition $i$ (of type $t$) is associated to a player
$i$ of type $t$.
\end{prop}
\end{proof}
To check that Prop. \ref{prop:42} holds we must verify that the
elements of the last row of $T$ satisfy:
\begin{subequations}\label{Eqn:42}
\begin{align}
t_{nj}=\sum_{i=1,\ldots,n-1}(-1)^{t}w_i\cdot
a_{ij}+a_{nj}=0\quad \text{for any $j=1,\ldots,n-1$}\label{Eqn42:first}\\
t_{nn}=\sum_{i=1,\ldots,n-1}(-1)^{t}w_i\cdot
a_{in}+a_{nn}=(q)\cdot(-1)^{h+1}\quad \text{for $j=n$}\label{Eqn42:second}
\end{align}
\end{subequations}

We now give the proof of equation \eqref{Eqn42:first}.
\begin{proof} As a consequence of Propositions
\ref{prop:321}, \ref{prop:322}, \ref{prop:323}, \ref{prop:325}
of section \ref{sect:32}, a null vector for the first $n-1$
components of the row vector $t_n$ is obtained if and only if
the following relations hold:
\begin{subequations}\label{Eqn:43}
\begin{align}
\sum_{0<s \hspace{0.1cm}\text{even}<t}x_s\cdot z_s+z_t+1=0\quad \text{for columns of type $t<h$ odd}\label{Eqn43:first}\\
\sum_{0<s \hspace{0.1cm} \text{odd}<t}x_s\cdot z_s+z_t+0=0\quad \text{for columns of type $t<h$ even}\label{Eqn43:second}
\end{align}
\end{subequations}
Then, starting from $t=1$, recursively:
$$
z_1=-1=-w_1 \quad\text{by \eqref{Eqn1:first}}
$$
for $t=2$:
$$
x_1\cdot z_1+z_2=0;\hspace{0.1cm} z_2=x_1\cdot w_1=w_2 \quad\text{by \eqref{Eqn1:second}}
$$
for $t=3$:
$$
x_2\cdot z_2+z_3+1=0;\hspace{0.1cm} z_3=-(1+ x_2\cdot w_2 )=-w_3 \quad\text{by \eqref{Eqn2:first}}
$$
for $t=4$:
$$
x_1\cdot z_1+ x_3\cdot z_3+z_4=0;\hspace{0.1cm} z_4=(x_1\cdot w_1+ x_3\cdot w_3 )=w_4 \quad\text{by \eqref{Eqn1:second}}
$$
and, by immediate induction on $t$, $z_t=-w_t$ for $t$ odd and
$z_t=w_t$ for $t$ even.
\end{proof}

We give now the proof of equation \eqref{Eqn42:second}.
\begin{proof} Suppose \eqref{Eqn42:first} holds
and consider at first an even $h$; then (by Proposition
\ref{prop:324}) $a_{in}=1$ for all coalitions $i$ of odd type
(odd $t$) and 0 for all coalitions of even type, while (by
Proposition \ref{prop:325}) $a_{nn}=0$, and we have
\begin{equation}\label{Eqn:boh}
t_{nn}=\sum_{i=1,\ldots,n-1}(-1)^{t}w_i\cdot a_{in}+a_{nn}=
-(x_1\cdot w_1+x_3\cdot w_3+\ldots+x_{h-1}\cdot w_{h-1})
\end{equation}

Formula \eqref{Eqn:boh} gives $t_{nn}$ as the opposite of the
sum of the weights of all odd players, which is the total
weight $q$ of the minimal winning coalition associated to the
top player.

Hence $t_{nn}=-q=(q)\cdot(-1)^{h+1}$.

On the other side, if $h$ is odd $a_{nn}=1$ and $a_{in}=1$ for
all even $t$ and we have:
\begin{equation}\label{Eqn:boh2}
t_{nn}=\sum_{i=1,\ldots,n-1}(-1)^t\cdot w_i\cdot a_{in}+a_{nn}=
=(x_2\cdot w_2+x_4\cdot w_4+\ldots+x_{h-1}\cdot w_{h-1})+1
\end{equation}
Then \eqref{Eqn:boh2} gives $t_{nn}$ as the sum of the
weights of all even players and one of the players with minimum
weight, which is the total weight $q$ of a minimal winning
coalition associated to one peone.

Hence $t_{nn}=q=(q)\cdot(-1)^{h+1}$ and the proof of Theorem
\ref{teo:42} has been completed.

\end{proof}

After that Theorem \ref{teo:41} follows immediately from the
chain:
$$
(q)\cdot(-1)^{h+1}=|A|=|A^T|=|\overline{A}|=(\overline{q})\cdot(-1)^{h+1}
$$

\section{Balanced lotteries and twin relationships in $P$ games}

\begin{defi}
A lottery on a $n$ person $P$ game $G$ is a probability
distribution on the minimal winning coalitions of $G$, i.e. a column vector $\mathbf{p}=(p_1,\ldots,p_j,\ldots,p_n)$ with $p_j$ the probability assigned to the minimal winning coalition $S_j$.
\end{defi}

\begin{defi}\label{def:52}
A balanced lottery on $G$ is a lottery
$\textbf{p}$ which assigns to all
players the same probability $\pi$ to be a member of the
minimal winning coalition selected by the lottery. Hence and
more formally a balanced lottery should satisfy:
$$
\textbf{p}^T\cdot A=\pi \cdot\1^T
$$
\end{defi}

The following results connect balanced lotteries and twin
relationships in $P$ games.

\begin{teo}\label{teo:51}
Let $G$ and $\overline{G}$ be a couple of twin games. There exists
just one balanced lottery $\textbf{p}$ on $G$; its
probabilities are given by the normalized individual weights of
the twin $\overline{G}$ according to the following rule:
\begin{subequations}\label{Eqn:51}
\begin{align}
p_j&=\overline{w}_{n-j}/\overline{w}(N) \quad\text{for $j=1,\ldots,n-1$}\label{Eqn:51first}\\
p_n&=\overline{w}_{n}/\overline{w}(N) \label{Eqn:51second}
\end{align}
\end{subequations}
\end{teo}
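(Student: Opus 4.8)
The plan is to read Definition \ref{def:52} as a linear system and solve it explicitly using the twin structure. Transposing the balanced condition $\mathbf{p}^T A=\pi\,\1^T$ gives $A^T\mathbf{p}=\pi\,\1$, so a balanced lottery is exactly a solution $(\mathbf{p},\pi)$ of this system that is also a genuine probability vector ($\mathbf{p}\ge\0$, $\1^T\mathbf{p}=1$). Since by Theorem \ref{teo:42} the matrix $A$ — hence $A^T$ — is non-singular, for every fixed $\pi$ the system has the unique solution $\mathbf{p}=\pi\,(A^T)^{-1}\1$; thus the whole problem reduces to exhibiting one probability vector of this form and checking that the normalization $\1^T\mathbf{p}=1$ pins down $\pi$. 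The decisive idea is to convert the ``reordered transposition'' linking $A$ and $\overline A$ into a clean matrix identity and then let the homogeneity of the twin do the work.

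First I would introduce the involution $R$, the $n\times n$ permutation matrix that reverses the first $n-1$ coordinates and fixes the last one ($R_{i,n-i}=1$ for $i<n$ and $R_{n,n}=1$, so $R=R^T=R^{-1}$). Reading the relations \eqref{Eqn:ris3.1} entrywise, the passage from $A^T$ to $A^{\tau}=\overline A$ is precisely conjugation by $R$, i.e. $\overline A=R\,A^T R$, equivalently $A^T=R\,\overline A\,R$. Next I would record the homogeneity of the twin: since every row of $\overline A$ is the indicator of a minimal winning coalition of $\overline G$, each of total weight $\overline q$, one has $\overline A\,\overline{\w}=\overline q\,\1$, where $\overline{\w}=(\overline w_1,\ldots,\overline w_n)^T$.

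Then I would check that the candidate $\mathbf{p}$ of \eqref{Eqn:51} is exactly the reversal of the normalized twin weights. Indeed, by \eqref{Eqn:51first}--\eqref{Eqn:51second} one gets $(R\mathbf{p})_i=p_{n-i}=\overline w_i/\overline w(N)$ for $i<n$ and $(R\mathbf{p})_n=p_n=\overline w_n/\overline w(N)$, so $R\mathbf{p}=\overline{\w}/\overline w(N)$. Feeding this into the two identities above,
\[
A^T\mathbf{p}=R\,\overline A\,R\,\mathbf{p}=R\,\overline A\,\frac{\overline{\w}}{\overline w(N)}=\frac{\overline q}{\overline w(N)}\,R\,\1=\frac{\overline q}{\overline w(N)}\,\1,
\]
since $R\,\1=\1$; hence the balanced condition holds with $\pi=\overline q/\overline w(N)$. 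That $\mathbf{p}$ is a genuine lottery is immediate, as its entries are positive and $\1^T\mathbf{p}=\sum_{j<n}\overline w_{n-j}/\overline w(N)+\overline w_n/\overline w(N)=\overline w(N)/\overline w(N)=1$.

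Finally, for uniqueness I would argue once more from non-singularity: any balanced lottery obeys $\mathbf{p}=\pi(A^T)^{-1}\1$ together with $\1^T\mathbf{p}=1$, whence $\pi\,\bigl(\1^T(A^T)^{-1}\1\bigr)=1$. The scalar $\1^T(A^T)^{-1}\1$ is fixed and nonzero — nonzero because the solution just exhibited has $\pi=\overline q/\overline w(N)\neq0$ — so $\pi$, and therefore $\mathbf{p}$, is uniquely determined. The only genuinely delicate step is the first one: justifying $\overline A=R\,A^T R$, where the special role of the top player (the last, fixed coordinate) must be tracked carefully against the separate cases of \eqref{Eqn:ris3.1}. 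Once this conjugation identity is in hand, the homogeneity relation $\overline A\,\overline{\w}=\overline q\,\1$ makes everything else essentially automatic.
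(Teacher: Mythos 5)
Your proof is correct, and its skeleton is the same as the paper's: transpose the balanced condition to $A^T\mathbf{p}=\pi\1$, invoke non-singularity of $A$ (Theorem \ref{teo:42}), use the homogeneity relation of the twin, and pin down $\pi$ by the normalization $\1^T\mathbf{p}=1$. The difference lies in how the reordering of players is handled, and in the direction of the argument. The paper works directly with $A^T$, writing $\overline{A}\,\overline{\w}=A^T\overline{\w}=\overline{q}\,\1$ --- i.e.\ it tacitly takes $\overline{\w}$ to be listed in the column order of $A^T$, which is reversed for non-top players --- then solves the two linear systems to arrive at $\mathbf{p}=(\pi/\overline{q})\,\overline{\w}$, and only \emph{after} the proof explains, in words via Prop.~\ref{prop:332}, why this produces the index shift $p_j=\overline{w}_{n-j}/\overline{w}(N)$ of \eqref{Eqn:51first}. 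You instead keep $\overline{\w}$ in standard order, encode the reordering once and for all as the conjugation identity $\overline{A}=R\,A^T R$ extracted from \eqref{Eqn:ris3.1}, and then verify the candidate of \eqref{Eqn:51}. What your route buys is rigor exactly where the paper is informal: the passage from the transposed system to the stated formula with the $n-j$ indices becomes a proved matrix identity rather than a verbal remark, and your uniqueness step (showing $\1^T(A^T)^{-1}\1\neq 0$ because the exhibited solution has $\pi\neq 0$, so that the normalization determines $\pi$) is slightly more complete than the paper's bare appeal to non-singularity, which by itself only gives uniqueness of $\mathbf{p}$ for a \emph{fixed} $\pi$. What the paper's route buys is that it is a derivation rather than a verification: the formula for $\mathbf{p}$ falls out of solving the system, instead of being guessed and checked against it.
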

while
\begin{equation}\label{Eqn:51c}
 \pi=\overline{q}/\overline{w}(N)=q/w(N)
\end{equation}

\begin{proof}
By definition \ref{def:52} a balanced lottery $\mathbf{p}$ on $G$ must
satisfy
\begin{equation}\label{Eqn:52}
\mathbf{p}^T\cdot A=\pi\cdot\textbf{1}^T
\end{equation}
while by Prop. \ref{prop:331}:
\begin{equation}\label{Eqn:53}
    \overline{A}\overline{\w}=A^T\overline{\w}=\overline{q}\textbf{1}
\end{equation}

Exploiting non singularity and hence invertibility of $A$,
multiply \eqref{Eqn:52} and \eqref{Eqn:53}
respectively by $A^{-1}$ and $(A^T)^{-1}$ to obtain:
\begin{equation}\label{Eqn:52primo}
\mathbf{p}^T=\pi\1^T A^{-1}\quad\text{or}\quad\mathbf{p}=\pi (A^{-1})^{T}\textbf{1}
\end{equation}
\begin{equation}\label{Eqn:53primo}
    \overline{\w}=\overline{q}(A^T)^{-1}\1
\end{equation}
Being $(A^{-1})^T=(A^T)^{-1}$ it is:
\begin{equation}\label{Eqn:54}
    \pi^{-1}\mathbf{p}=\overline{q}^{-1}\overline{\w}
\end{equation}
so as:
\begin{equation}\label{Eqn:55}
    \mathbf{p}=\cfrac{\pi}{\overline{q}}\overline{\w}
\end{equation}

Premultiplying both sides of \eqref{Eqn:55} by $\1^{T}$
we obtain
\begin{equation}\label{Eqn:56}
    \1^{T}\mathbf{p}=1=\cfrac{\pi}{\overline{q}}\1^{T}\overline{\w}=\cfrac{\pi}{\overline{q}}\overline{w}(N)
\end{equation}
or
\begin{equation}\label{Eqn:56primo}
    \cfrac{\pi}{\overline{q}}=\cfrac{1}{\overline{w}(N)}
\end{equation}
and finally, by Theorem \ref{teo:41}
\begin{equation}\label{Eqn:57}
   \pi=\cfrac{\overline{q}}{\overline{w}(N)}=\cfrac{q}{w(N)}
\end{equation}
which proves \eqref{Eqn:51c} and
\begin{equation}\label{Eqn:58}
   \mathbf{p}=\cfrac{1}{\overline{w}(N)}\overline{\w}
\end{equation}
which gives the probability vector of the balanced lottery,
which is unique thanks to the non singularity of $A$.
\end{proof}

To understand the behaviour of the first $n-1$ components of
the vector $\mathbf{p}$, given in Theorem \ref{teo:51}, recall
that by Prop. \ref{prop:332} the ordering of coalitions and
players in $A^T$ (in $\overline{G}$) is, except for the
coalition associated to the top player, reversed respect to the
one in $A$ (in $G$); this implies that, for such coalitions,
the probability assigned to a coalition of type $t$ (in $G$) is
the (normalized) weight of a player of type $h-t$ in
$\overline{G}$.

We conclude this section with:
\begin{prop}\label{Prop:51}
If the division of payoffs among members of a minimal winning
coalition is proportional to their individual weights (in such
a coalition), the balanced lottery gives to each player in the
game $G$ an expected payoff proportional to her individual
weight in $G$ (and not in $\overline{G}$!).
\end{prop}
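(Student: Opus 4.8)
The plan is to compute directly the expected payoff accruing to an arbitrary player $j$ under the two-stage reward mechanism (lottery draw followed by weight-proportional division) and to exhibit it as a fixed multiple of $w_j$. First I would invoke homogeneity of the representation $(q;\w)$: every minimal winning coalition $S_i$ satisfies $w(S_i)=q$. Hence, if $S_i$ is the coalition selected by the lottery, dividing the unit global reward in proportion to individual weights assigns to each member $j\in S_i$ exactly the share $w_j/q$, and $0$ to every player outside $S_i$.

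Next I would assemble the expectation. Writing $E_j$ for the expected payoff of player $j$ and averaging the share over the coalitions that may be drawn,
$$E_j=\sum_{i:\,j\in S_i}p_i\cdot\frac{w_j}{q}=\frac{w_j}{q}\sum_{i:\,j\in S_i}p_i.$$
The inner sum is precisely the $j$-th entry of the row vector $\mathbf{p}^{T}A$, since $a_{ij}=1$ exactly when $j\in S_i$. By the defining identity of a balanced lottery (Definition \ref{def:52}), $\mathbf{p}^{T}A=\pi\cdot\1^{T}$, so $\sum_{i:\,j\in S_i}p_i=\pi$ for every $j$. Substituting yields
$$E_j=\frac{\pi}{q}\cdot w_j,$$
where the factor $\pi/q$ does not depend on $j$. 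Thus the expected payoffs are proportional to the individual weights $w_j$ in $G$, which is the assertion. As a consistency check one may sum over all players: using \eqref{Eqn:51c}, namely $\pi=q/w(N)$, one obtains $\sum_j E_j=(\pi/q)\,w(N)=1$, so the whole reward is distributed in expectation, as it must be.

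I would close by addressing the emphatic ``and not in $\overline{G}$''. Although the lottery probabilities $p_j$ are the normalized weights of the twin $\overline{G}$ (Theorem \ref{teo:51}), the proportional-division step in the third stage uses the weights of $G$; the balanced condition collapses every membership probability to the common scalar $\pi$, so the twin's weights enter $E_j$ only through that single factor and cancel against nothing player-specific. The player-dependent part is carried entirely by $w_j$, the weight in $G$.

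Honestly there is no genuine obstacle here: the result is a one-line computation once the homogeneity fact $w(S_i)=q$ and the balanced-lottery equation are in hand. The only conceptual subtlety worth flagging is the interplay between the two distinct appearances of ``weights'' — those of $\overline{G}$ fixing the draw probabilities, those of $G$ governing the split — and checking that they decouple exactly as claimed; the balanced condition is precisely what effects this decoupling.
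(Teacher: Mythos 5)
Your proof is correct and follows essentially the same route as the paper: the paper's one-line argument $E(j)=(w_j/q)\pi=(w_j/q)(q/w(N))=w_j/w(N)$ is exactly your computation, with your steps merely unpacking why the conditional share is $w_j/q$ (homogeneity) and why the membership probability equals $\pi$ (the balanced-lottery identity $\mathbf{p}^{T}A=\pi\1^{T}$). The only cosmetic difference is that the paper substitutes $\pi=q/w(N)$ to exhibit the explicit constant $1/w(N)$, whereas you keep $\pi/q$ abstract and use that substitution only as a consistency check.
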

We wish to underline here that the rationality of the formation
of a minimal winning coalition, with division of payoff among
its members proportional to their individual weights, goes back
to the early stage of $n$ person game theory in coalitional
form; in particular, for $P$ games the set of $n$
``imputations'' generated by this logic are a stable set
solution \textit{a la} Von Neumann-Morgenstern of the $P$ game
$G$\footnote{As well known \cite{VNM47} a stable set solution
is a set $V$ of imputations such that: there is no dominance
among imputations of $V$, and any imputation not in $V$ is
dominated by at least one imputation of $V$.}.

The proof of the Prop. \ref{Prop:51} is immediate as the
expected reward $E(j)$ for any player $j$ is given, keeping
account of \eqref{Eqn:56primo}, by:
\begin{equation}\label{Eqn:59}
    E(j)=(w_j/q)\pi = (w_j/q)(q/w(N)) =(w_j/w(N))
\end{equation}

In this way both expected payoffs and actual payoff division
are proportional to the weights of the minimal homogeneous
representation of the game. We could say that ex ante and ex
post fairness are reconciled. We recall that this result is
similar to the one proposed by Montero \cite{Montero} and
Montero-Vidal Puga \cite{Montero2} as the outcome of a more
sophisticated model of bargaining.

\section{Some examples}\label{Sect:examples}

\paragraph{Ex. 1}
Let $G$ be the nine person $P$ game with minimal homogeneous
representation given by:
$$26;\text{ }1,1,1,3,4,4,11,11,15$$
The incidence matrix $A$ of $G$ is then:
$$A=\begin{bmatrix}
1&0&0&1&0&0&1&1&0\\
0&1&0&1&0&0&1&1&0\\
0&0&1&1&0&0&1&1&0\\
0&0&0&1&1&1&0&0&1\\
0&0&0&0&1&0&1&1&0\\
0&0&0&0&0&1&1&1&0\\
0&0&0&0&0&0&1&0&1\\
0&0&0&0&0&0&0&1&1\\
1&1&1&0&1&1&0&0&1\\
\end{bmatrix}$$
The transposition of $A$ gives the following incidence matrix
$\overline{A}$ of the game $\overline{G}$, the twin of $G$,
whose minimal homogeneous representation is
$26;\text{}1,1,2,2,5,7,7,7,19$,
$$\overline{A}=A^T=\begin{bmatrix}
1&0&0&0&0&0&0&0&1\\
0&1&0&0&0&0&0&0&1\\
0&0&1&0&0&0&0&0&1\\
1&1&1&1&0&0&0&0&0\\
0&0&0&1&1&0&0&0&1\\
0&0&0&1&0&1&0&0&1\\
1&1&1&0&1&1&1&0&0\\
1&1&1&0&1&1&0&1&0\\
0&0&0&1&0&0&1&1&1\\
\end{bmatrix}$$
Note that in this matrix the ordering of all players, but the
top, on columns and of the corresponding coalitions on rows is
reversed with respect to the standard one; for example, the
first block of three rows corresponds to the three coalitions
of type $t=4$ in the game $\overline{G}$; such coalitions are
formed by a vice top player of type 4 and by the top player.

If we wish to have $\overline{A}$ in the form coherent with the
standard ordering of players and coalitions we modify $A^{T}$
into the modified transposed $A^{\tau}$:
$$\overline{A}=A^{\tau}=\begin{bmatrix}
1&0&1&1&0&1&1&1&0\\
0&1&1&1&0&1&1&1&0\\
0&0&1&0&1&0&0&0&1\\
0&0&0&1&1&0&0&0&1\\
0&0&0&0&1&1&1&1&0\\
0&0&0&0&0&1&0&0&1\\
0&0&0&0&0&0&1&0&1\\
0&0&0&0&0&0&0&1&1\\
1&1&0&0&1&0&0&0&1\\
\end{bmatrix}$$

It is immediate to check that $G$ and $\overline{G}$ have the
same minimal winning quota $q=\overline{q}=26$.

The balanced lottery on $G$ is given, according to
\eqref{Eqn:51first} and \eqref{Eqn:51second} by the vector:
$$(7/51,7/51,7/51,5/51,2/51,2/51,1/51,1/51,19/51)$$ Then, for
example, the balanced lottery gives probability 7/51 to the
coalition formed by one of the peones, the player with weight 3
and both the last but top players.

Conversely, the balanced lottery on $\overline{G}$ is given by
$$(11/51,11/51,4/51,4/51,3/51,1/51,1/51,1/51,15/51)$$ Note the
extreme low probability (1/51) given in both games to the
coalitions formed by the top and by one of the last but top
players.

\paragraph{Ex. 2}

Let $G$ be the nine person $P$ game with minimal homogeneous
representation given by:
$$25;\text{ }1,1,1,3,4,7,7,7,18$$
The incidence matrix $A$ of $G$ is then:
$$A=\begin{bmatrix}
1&0&0&1&0&1&1&1&0\\
0&1&0&1&0&1&1&1&0\\
0&0&1&1&0&1&1&1&0\\
0&0&0&1&1&0&0&0&1\\
0&0&0&0&1&1&1&1&0\\
0&0&0&0&0&1&0&0&1\\
0&0&0&0&0&0&1&0&1\\
0&0&0&0&0&0&0&1&1\\
1&1&1&0&1&0&0&0&1\\
\end{bmatrix}$$

The transposition of $A$ gives the following incidence matrix
$\overline{A}$ of the game $\overline{G}$, the twin of $G$,
whose minimal homogeneous representation is still $25;\text{
}1,1,1,3,4,7,7,7,18$:
$$A^T=\begin{bmatrix}
1&0&0&0&0&0&0&0&1\\
0&1&0&0&0&0&0&0&1\\
0&0&1&0&0&0&0&0&1\\
1&1&1&1&0&0&0&0&0\\
0&0&0&1&1&0&0&0&1\\
1&1&1&0&1&1&0&0&0\\
1&1&1&0&1&0&1&0&0\\
1&1&1&0&1&0&0&1&0\\
0&0&0&1&0&1&1&1&1\\
\end{bmatrix}$$

Note that $G$ and $\overline{G}$ have the same minimal
homogeneous representation, which reveals that they are the
same game, even if, at first sight, it seems different from
$A$. Really the difference is not substantial as it comes
merely from the reversion of the order of non top players and
associated coalitions.

The identity between $G$ and $\overline{G}$ is recovered once
we compute $A^{\tau}$.
$$A^{\tau}=\begin{bmatrix}
1&0&0&1&0&1&1&1&0\\
0&1&0&1&0&1&1&1&0\\
0&0&1&1&0&1&1&1&0\\
0&0&0&1&1&0&0&0&1\\
0&0&0&0&1&1&1&1&0\\
0&0&0&0&0&1&0&0&1\\
0&0&0&0&0&0&1&0&1\\
0&0&0&0&0&0&0&1&1\\
1&1&1&0&1&0&0&0&1\\
\end{bmatrix}$$

It is immediately checked that the $A^{\tau}$ version of
$\overline{A}$ is equal to $A$, which confirms that
$\overline{G}=G$, as revealed also by the bilateral symmetry of
the representation vector $_f\x$ or by the coincidence of
$_f\x$ and $_f\overline{\x}$ both equal to (3,1,1,3).

To resume we could say that
\begin{prop}\label{Prop:61}
$G$ and $\overline{G}$ are coincident (are the same game) if
there is equality between: a) $A$ and the $A^{\tau}$ version of
$\overline{A}$ or b) between the free type representations
$_f\x$ and $_f\overline{\x}$ or c) between the minimal
homogeneous representations $(q,\w)$ and
$(\overline{q};\overline{\w})$.
\end{prop}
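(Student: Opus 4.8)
The plan is to prove that the three equalities (a), (b) and (c) are mutually equivalent and that each of them is equivalent to the coincidence $G=\overline{G}$; the unifying remark is that all three merely record one and the same palindromic symmetry, $x_t=x_{h-t}$ for every $t=1,\ldots,h-1$. First I would fix the meaning of ``$G$ and $\overline{G}$ are the same game'': recalling the uniqueness of the minimal homogeneous representation, two games of our class coincide if and only if their representations $(q;\w)$ and $(\overline{q};\overline{\w})$ are identical, which is exactly statement (c). Hence it suffices to establish the chain $\text{(a)}\Leftrightarrow\text{(b)}\Leftrightarrow\text{(c)}$. Throughout I would invoke Prop. \ref{prop:333}, $\overline{x}_t=x_{h-t}$, which turns (b), i.e. $_f\x={}_f\overline{\x}$, into the symmetry condition $x_t=x_{h-t}$ for all $t<h$; since $x_h=\overline{x}_h=1$ always, this is the same as $\x=\overline{\x}$ at the level of full type representations.

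For $\text{(b)}\Leftrightarrow\text{(c)}$ I would use that the free type representation and the full minimal homogeneous representation determine one another bijectively. Going forward, the recursion \eqref{Eqn:recursive} builds the entire weight sequence $w_1,\ldots,w_h$ from $_f\x$, and the multiplicities are the $x_t$ themselves, so $_f\x$ fixes $(q;\w)$. Going backward, the same recursion is invertible: $w_2=x_1$, then $x_{t-1}=(w_t-w_{t-2})/w_{t-1}$ for $3\le t\le h-1$, and $x_{h-1}=1+(w_h-w_{h-2})/w_{h-1}$, so the distinct type weights and their multiplicities read off $(q;\w)$ return $_f\x$. Therefore $(q;\w)=(\overline{q};\overline{\w})$ holds precisely when $_f\x={}_f\overline{\x}$.

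For $\text{(a)}\Leftrightarrow\text{(b)}$ I would argue that the incidence matrix, written in the standard ordering, is a faithful encoding of the type representation. By Propositions \ref{prop:321}--\ref{prop:325} the matrix $A$ is assembled block by block out of $\x$ alone: the diagonal identity blocks have sizes $x_1,\ldots,x_{h-1}$, the sub-diagonal blocks are null, and the remaining blocks are constant $0$ or $1$ according to the parity rule; thus $A$ is a function of $\x$, while conversely the block sizes recover $\x$. Now $A^{\tau}$, produced from $A^{T}$ by the row and column reversals of \eqref{Eqn:ris3.1}, is exactly the incidence matrix $\overline{A}$ of $\overline{G}$ in the standard ordering, so statement (a), $A=A^{\tau}$, says that $G$ and $\overline{G}$ share the same standard-ordered matrix, i.e. $\x=\overline{\x}$, i.e. (b). This closes the cycle and, with (c), identifies all three with $G=\overline{G}$.

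The step I expect to be the main obstacle is not conceptual but a matter of ordering bookkeeping: one must check carefully that $A^{\tau}$ genuinely is the standard-ordered incidence matrix $\overline{A}$, that is, that the two reversals in \eqref{Eqn:ris3.1} correctly undo the order inversion of non-top players and coalitions described in Prop. \ref{prop:332} while leaving the top player and top coalition fixed. Once the ordering conventions are aligned the comparison $A=A^{\tau}$ is meaningful, and its equivalence with the palindrome $x_t=x_{h-t}$ is immediate from the block description; the remaining ingredient, invertibility of the weight recursion, is routine.
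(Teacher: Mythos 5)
Your proof is correct and is essentially the argument the paper intends: Prop.~\ref{Prop:61} is stated there without a formal proof (``To resume we could say that''), and its implicit justification is exactly the chain you spell out --- uniqueness of the minimal homogeneous representation, the bijection between $_f\x$ and $(q;\w)$ given by the recursion \eqref{Eqn:recursive}, Prop.~\ref{prop:333}, and the identification of $A^{\tau}$ with the standard-ordered $\overline{A}$. Your closed cycle (a)$\Leftrightarrow$(b)$\Leftrightarrow$(c)$\Leftrightarrow$($G=\overline{G}$) is sound and in fact slightly stronger than the one-directional claim in the statement.
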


On the contrary, the formal equality between $A$ and the $A^T$
version of $\overline{A}$ is not a necessary condition for the
coincidence between $G$ and $\overline{G}$, except for games
$G$ with only two types $(h=2)$.

Concerning this point the following result holds:

\begin{prop}\label{Prop:62}
The incidence matrix $A$ of any $P$ game $G$ with $h=2$ types
of players satisfies $A=A^T=A^{\tau}$. Hence for such games
surely $G=\overline{G}$.
\end{prop}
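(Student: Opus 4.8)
The plan is to write down the incidence matrix $A$ of an $h=2$ game explicitly, read off its symmetry by inspection, and then verify that the $\tau$-reversal producing $\overline{A}$ leaves $A$ fixed; the conclusion $G=\overline{G}$ is then immediate via criterion (a) of Proposition \ref{Prop:61}.

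First I would pin down the structure of $A$ when $h=2$. Here the free part consists of a single type, so the submatrix $M$ reduces to the lone diagonal block $B_{1,1}$, which by Proposition \ref{prop:321} is the $x_1\times x_1$ identity. The border is supplied by Propositions \ref{prop:324} and \ref{prop:325}: the last-column block $B_{1,2}$ has $r+c=3$ odd, hence is an all-ones column; the last-row block $B_{2,1}$ has $c=1$ odd, hence is an all-ones row; and the corner $a_{nn}$, corresponding to $c=h=2$ even, equals $0$. Thus, with $n=x_1+1$, $A$ is the identity $I_{x_1}$ bordered by a final all-ones column and all-ones row meeting in a zero corner. (Equivalently, one reads this off the one-to-one correspondence: each peone $i$ is matched to the coalition $\{i,\text{top}\}$ and the top player to the coalition of all peones.) Symmetry $A=A^{T}$ is then visible at once, since the identity block is symmetric, the all-ones last column coincides with the all-ones last row, and the corner entry lies on the diagonal.

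For $A^{\tau}=A$ I would use that the $\tau$-operation acts on the leading $(n-1)\times(n-1)$ block by conjugation with the reversal (exchange) matrix $J_{n-1}$, that is $A^{\tau}=\mathrm{diag}(J_{n-1},1)\,A^{T}\,\mathrm{diag}(J_{n-1},1)$: the row-reversal of the first $n-1$ entries is a right-multiplication and the subsequent column-reversal a left-multiplication. On the leading block this sends $M^{T}\mapsto J_{n-1}M^{T}J_{n-1}$, while it fixes the bordering all-ones vectors (reversing a constant vector changes nothing) and the corner entry (the $(n,n)$ position is never moved). Since here $M^{T}=I_{x_1}$ and $J_{x_1}I_{x_1}J_{x_1}=J_{x_1}^{2}=I_{x_1}$, the block is unchanged, whence $A^{\tau}=A$. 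As $A^{\tau}$ is by construction the standard-ordered incidence matrix $\overline{A}$ of the twin, we obtain $\overline{A}=A$, and therefore $G=\overline{G}$ by criterion (a) of Proposition \ref{Prop:61}.

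The computation is entirely routine; the only point needing care is the bookkeeping of the $\tau$-reversal, namely recognizing it as the double (row-then-column) reversal of the first $n-1$ coordinates and checking that it returns $J^{2}=I$ on the identity block while leaving the border and corner fixed. As a sanity check, the coincidence $G=\overline{G}$ also follows in one line from Proposition \ref{prop:333}: for $h=2$ it gives $\overline{x}_{1}=x_{2-1}=x_{1}$, so ${}_f\x={}_f\overline{\x}$, and criterion (b) of Proposition \ref{Prop:61} yields $G=\overline{G}$.
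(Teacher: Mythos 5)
Your proposal is correct and follows essentially the same route as the paper: it identifies $A$ for $h=2$ as the identity block bordered by all-ones last row and column with a zero corner, notes this is fixed under transposition and under the $\tau$-reversal (the paper phrases this as ``no reversion of the order of non top players is possible when $h-1=1$''), and also invokes the bilateral symmetry of $_f\x$ via Proposition \ref{prop:333}, exactly as the paper does. Your version merely makes the paper's terse argument explicit, e.g.\ by writing the $\tau$-operation as conjugation by $\mathrm{diag}(J_{n-1},1)$, which is a faithful formalization of Result \eqref{Eqn:ris3.1}.
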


The proof is immediate keeping account that no reversion of the
order of non top players (and associated coalitions) is
possible when $h-1=1$, while all but the last entries of the
last row and the last column are 1; the same information comes
from the bilateral symmetry of $_f\x$ which implies equality of
$_f\x$ and $_f\overline{\x}$.

Finally the balanced lotteries on both $G$ and $\overline{G}$
are given by :
$$7/25,7/25,7/25,4/25,3/25,1/25,1/25,1/25,18/25$$

More generally it is obvious that:
\begin{prop}\label{Prop:62}
A pair $G$ and $\overline{G}$ of identical twin games have the
same balanced lottery, while not identical twins $G$ and
$\overline{G}$ have the same minimal winning quota but
different balanced lotteries.
\end{prop}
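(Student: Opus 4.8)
The plan is to handle the two assertions separately, reducing each to the explicit description of the balanced lottery given by Theorem \ref{teo:51} together with the coincidence criterion of Proposition \ref{Prop:61}. First I would record a preliminary fact used in both parts: a pair of twins shares not only the minimal winning quota but also the total weight. Indeed, Theorem \ref{teo:41} gives $q=\overline{q}$, and since the minimal homogeneous representation satisfies $q=\frac{1+w(N)}{2}$ (and $\overline{q}=\frac{1+\overline{w}(N)}{2}$), one obtains $w(N)=2q-1=2\overline{q}-1=\overline{w}(N)$. I will write $W$ for this common value; both balanced lotteries will therefore carry the same normalization.

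For the first assertion, if $G=\overline{G}$ the two games are the very same object, so by the uniqueness clause of Theorem \ref{teo:51} they possess one and the same balanced lottery, and nothing more is needed. I would nevertheless note the internal consistency of the formula: Theorem \ref{teo:51} gives the lottery on $G$ as $\mathbf{p}=\frac{1}{\overline{w}(N)}\overline{\w}$ and, applied to $\overline{G}$ (whose twin is $G$), the lottery on $\overline{G}$ as $\overline{\mathbf{p}}=\frac{1}{w(N)}\w$; under $G=\overline{G}$ one has $\w=\overline{\w}$ and $w(N)=\overline{w}(N)$, so $\mathbf{p}=\overline{\mathbf{p}}$.

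For the second assertion the equality of quotas is immediate from Theorem \ref{teo:41}, which assumes no identity between the twins. The real content is that the lotteries differ, and here I would argue by contraposition. Theorem \ref{teo:51} applied to $G$ yields $p_j=\overline{w}_{n-j}/W$ for $j<n$ and $p_n=\overline{w}_n/W$, while applied to $\overline{G}$ it yields $\overline{p}_j=w_{n-j}/W$ for $j<n$ and $\overline{p}_n=w_n/W$. Since the same normalization $W$ occurs in both and the same index reversal $j\mapsto n-j$ is applied to the non-top components in both, equating the two vectors componentwise is equivalent to $\overline{w}_k=w_k$ for every $k$, i.e. to $\w=\overline{\w}$. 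By Proposition \ref{Prop:61}(c) this in turn is equivalent to $G=\overline{G}$. Hence if $G\neq\overline{G}$ the two balanced lotteries cannot coincide, which is the claim.

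The step needing the most care is precisely the reduction that makes ``equal balanced lottery'' a clean algebraic condition: I must justify that the two lottery vectors are expressed in a common normalization---this is the identity $W=w(N)=\overline{w}(N)$ above---and that the reordering of the non-top entries forced by the reversed ordering of $A^{T}$ (Proposition \ref{prop:332}) is the same permutation in both cases, so that equality of the lotteries genuinely collapses to equality of the weight vectors $\w$ and $\overline{\w}$ and thus, via Proposition \ref{Prop:61}, to the coincidence of the games.
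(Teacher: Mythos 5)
Your proposal is correct, and it takes the route the paper itself intends: the paper offers no written proof at all (the proposition is introduced with ``it is obvious that''), treating it as an immediate consequence of Theorem \ref{teo:41} together with the uniqueness and explicit formula of Theorem \ref{teo:51}, which is precisely what you formalize. Your contrapositive step for the second half --- equal lottery vectors force $\w=\overline{\w}$ because both are expressed with the common normalization $w(N)=\overline{w}(N)=2q-1$ and the same index reversal, whence $G=\overline{G}$ by Proposition \ref{Prop:61}(c) --- correctly supplies the only nontrivial detail the paper leaves unstated.
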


\section{Conclusions}

In his smart treatment of Parsimonious games, going back to the
early stage of game theory development, Isbell introduced a
twin relationship, which at least at our knowledge, did not
find any further investigation in relevant literature.

Looking carefully at the properties of twin games and
signalling preliminarily that twins have free type
representation vectors which are each other symmetric, we show
in this paper that a) twin games have the same minimal winning
quota and b) any $P$ game has just one balanced lottery, given
by the (properly normalized and reordered) vector of individual
weights of its twin.

Our paper is purely theoretical and we do not discuss here any
application of our results; yet we feel confident that, keeping
account of the prominent role played by symmetry in the design
of the universe, applications of symmetric properties in hard
as well as in social sciences may be found so that this could
be a promising road for future research\footnote{There is a
huge literature concerning symmetry in hard sciences; let us
recall here some prominent sentences: ``Symmetry is one idea by
which man through the ages has tried to comprehend and create
order, beauty and perfection'' \cite[p. 5]{We52}; ``Symmetry
considerations dominate modern fundamental physics both in
quantum theory and in relativity'' \cite[p. ix
preface]{BrCa03};``Symmetry plays an essential role in
science'' \cite[editor foreword]{Ros96}.}.


\end{document}